\newtheorem{theorem}{Theorem}[section]
\newtheorem{lemma}[theorem]{Lemma}
\newtheorem{proposition}[theorem]{Proposition}
\newtheorem{remark}[theorem]{Remark}
\newtheorem{maintheorem}{Theorem}
\def\N{\mathbb{N}}
\def\P{\mathbb{P}}
\def\Z{\mathbb{Z}}
\def\R{\mathbb{R}}
\def\E{\mathbb{E}}
\newcommand{\cf}{\mathcal{F}}
\newcommand{\cA}{\mathcal{A}}
\newcommand{\cB}{\mathcal{B}}
\newcommand{\cT}{\mathcal{T}}
\newcommand{\ce}{\mathcal{E}}
\newcommand{\cL}{\mathcal{L}}
\newcommand{\cg}{\mathcal{G}}
\newcommand{\cR}{\mathcal{R}}
\newcommand{\cJ}{\mathcal{J}}
\newcommand{\Addresses}{{
  \bigskip
  \footnotesize

  Riddhipratim Basu, \textsc{International Centre for Theoretical Sciences, Tata Institute of Fundamental Research, Bangalore, India}\par\nopagebreak
  \textit{E-mail address}: \texttt{rbasu@icts.res.in}

  \medskip

  Christopher Hoffman, \textsc{Department of Mathematics, University of Washington, Seattle, WA, USA}\par\nopagebreak
  \textit{E-mail address}: \texttt{hoffman@math.washington.edu}

  \medskip

  Allan Sly, \textsc{Department of Mathematics, Princeton University, Princeton, NJ, USA}\par\nopagebreak
  \textit{E-mail address}: \texttt{allansly@princeton.edu}

}}
\begin{document}
\title[Nonexistence of Bigeodesics]{Nonexistence of Bigeodesics in Integrable Models of Last Passage Percolation}

\author{Riddhipratim Basu
 \and
Christopher Hoffman
\and
Allan Sly
}

\date{}
\maketitle

\begin{abstract}
Bi-infinite geodesics are fundamental objects of interest in planar first passage percolation. A longstanding conjecture states that under mild conditions there are almost surely no bigeodesics, however the result has not been proved in any case. For the exactly solvable model of directed last passage percolation on $\Z^2$ with i.i.d.\ exponential passage times, we study the corresponding question and show that almost surely the only bigeodesics are the trivial ones, i.e., the horizontal and vertical lines. The proof makes use of estimates for last passage time available from the integrable probability literature to study coalescence structure of finite geodesics, thereby making rigorous a heuristic argument due to Newman \cite{ADH15}.
\end{abstract}


\section{Introduction}

We consider the following directed last passage percolation (LPP) model on $\Z^2$. Let $\{\xi_{v}\}_{v\in \Z^2}$ be an i.i.d.\ collection of $\mbox{Exp}(1)$ random variables and let us associate weight $\xi_{v}$ to each vertex $v\in \Z^2$. Define $u\preceq v$ if $u$ is co-ordinate wise smaller or equal than $v$ in $\Z^2$. For any oriented path $\gamma$ from $u$ to $v$ let the passage time of $\gamma$ be defined by
$$\ell(\gamma):=\sum_{v'\in \gamma\setminus \{v\}} \xi_{v'}.$$
For $u\preceq v$ define the last passage time from $u$ to $v$, denoted $T_{u,v}$ by $T_{u,v}:=\max_{\gamma} \ell (\gamma)$ where the maximum is taken over all up/right oriented paths from $u$ to $v$. Observe that by continuity of the exponential distribution, almost surely there exists a unique  path between every pair of (ordered) points $u$ and $v$ that attains this maximum. We shall denote by $\Gamma_{u,v}$ the path between $u$ and $v$ that attains the last passage time $T_{u,v}$ and call $\Gamma_{u,v}$ the geodesic between $u$ and $v$. 

Our object of interest is a bigeodesic, a bi-infinite up/right path $\gamma= \{v_i\}_{i\in \Z}$ such that for each  $i<j$ the restriction of $\gamma$ between $v_i$ and $v_j$ is the geodesic from $v_i$ to $v_j$. It is trivial to observe that the  horizontal and vertical lines, that is, the lines $\{x=i\}$ and $\{y=j\}$ for $i,j\in \Z$, are bigeodesics. We call these bigeodesics the trivial bigeodesics and any other bigeodesic a non-trivial bigeodesic. Our main theorem in this paper is the following.

\begin{maintheorem}
\label{t:bi}
For directed last passage percolation on $\Z^2$ with i.i.d.\ exponential passage times, almost surely there does not exist any non-trivial bigeodesic.
\end{maintheorem}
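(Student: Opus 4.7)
The plan is to implement Newman's heuristic, namely that the existence of a non-trivial bigeodesic is incompatible with the polynomial transversal fluctuation and rapid coalescence of finite and semi-infinite geodesics in exponential LPP; the integrable one-point moderate-deviation tail bounds on $T_{u,v}$ furnish the quantitative input. First, by translation invariance and ergodicity of the environment, it suffices to show that almost surely no non-trivial bigeodesic passes through the origin. A non-trivial bigeodesic $\gamma = \{v_i\}_{i\in\Z}$ has two asymptotic directions $\theta_{+} = \lim_{i\to\infty} v_i/\|v_i\|$ and $\theta_{-} = \lim_{i\to -\infty} v_i/\|v_i\|$, both lying in the open first quadrant (since the bigeodesic is non-trivial the directions are bounded away from the axes along some subsequence). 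Using monotonicity of the geodesic map (if $u\preceq u'$ and $v\preceq v'$ then $\Gamma_{u,v}$ and $\Gamma_{u',v'}$ are weakly ordered), a countable union reduces matters to the following: for each fixed dyadic pair $(\theta_-,\theta_+)$ bounded away from the axes and each small $\epsilon>0$, almost surely no bigeodesic through the origin has forward direction in $(\theta_+-\epsilon,\theta_++\epsilon)$ and backward direction in $(\theta_--\epsilon,\theta_-+\epsilon)$.

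For a fixed direction pair I would argue via a Newman-style counting argument. If such a bigeodesic exists then, for every large $N$, the finite geodesic from $u_N = -N(\cos\theta_-,\sin\theta_-)$ to $v_N = N(\cos\theta_+,\sin\theta_+)$ passes through the origin. Moderate-deviation tail bounds on passage times imply a transversal fluctuation estimate: this geodesic crosses the anti-diagonal line through the origin at a point whose location has fluctuation of order $N^{2/3}$ about its mean, so the probability that it passes through the origin for a specific choice of $(u_N,v_N)$ is $O(N^{-2/3})$. This alone is not summable, and more importantly many pairs $(u_N,v_N)$ could witness the same bigeodesic. This is where quantitative coalescence enters: I would show that all forward semi-infinite geodesics in direction $\theta_+$ from points in a transversal segment of length $N^{2/3}$ merge into a single path well before scale $N$ with high probability, with an analogous statement for backward geodesics in direction $\theta_-$. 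Hence only $O(1)$ candidate bigeodesics can pass through an $N^{2/3}$-size window near the origin. Combining coalescence with the passage estimate gives a per-scale probability that decays, and summing over dyadic scales $N = 2^k$ followed by Borel--Cantelli closes the argument.

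The main obstacle is obtaining coalescence rates for semi-infinite geodesics sharp enough to beat the $N^{2/3}$ transversal-fluctuation loss, for both forward and backward halves of the bigeodesic simultaneously. The directed nature of LPP makes the backward analysis non-trivial: by reflection symmetry of the i.i.d.\ field, backward semi-infinite geodesics into the origin from direction $\theta_-$ are distributed like forward semi-infinite geodesics in a reflected environment, but the forward and backward halves of a putative bigeodesic share the same noise and are not independent, so the two coalescence estimates must be combined through a joint analysis on a common probability space rather than a product-style union bound. A secondary difficulty is that a conclusion uniform in direction must be extracted via the monotonicity reduction in the first step, so that the continuum of possible bigeodesic directions does not disrupt the summability of per-scale probabilities; getting the constants in the coalescence/fluctuation estimates to be uniform in a neighbourhood of a direction $\theta$ requires careful use of the one-point estimates along with comparisons to stationary LPP lines.
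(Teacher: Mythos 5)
Your high-level plan is indeed the one the paper follows — implement Newman's heuristic by combining transversal-fluctuation bounds with a coalescence argument, restricting attention to bigeodesics through the origin by ergodicity — but there are two genuine gaps, one conceptual and one technical, that the paper has to work to fill.

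First, the reduction to non-axial directions is not correct as stated. You assert that ``since the bigeodesic is non-trivial the directions are bounded away from the axes along some subsequence.'' This is false: a non-trivial bigeodesic can be asymptotically vertically (or horizontally) directed while still making infinitely many horizontal (resp.\ vertical) steps, and also a bigeodesic that makes only finitely many steps in one coordinate direction is non-trivial as long as it is not exactly a coordinate line. This case genuinely occurs in the directed LPP setting (unlike in FPP, where Newman's heuristic was originally formulated), and the paper devotes a separate argument to it (Proposition~\ref{p:axial}), which in turn requires a new \emph{local} transversal-fluctuation estimate for steep geodesics (Theorem~\ref{l:ltf}, built on the uniform moderate-deviation estimate of \cite{LR09} that is valid for aspect ratios going to $0$). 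Without this, the reduction step in your first paragraph does not close.

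Second, the coalescence input you propose is too strong and is actually false. You want ``all forward semi-infinite geodesics in direction $\theta_+$ from points in a transversal segment of length $N^{2/3}$ merge into a single path well before scale $N$ \emph{with high probability}.'' As the paper points out (citing \cite{Pim16, BSS17++}), coalescence of all geodesics across an on-scale rectangle happens with positive probability but \emph{not} with high probability, so the ``$O(1)$ candidate bigeodesics with high probability'' step fails. The paper's fix is quantitative but weaker than what you assume: the number of equivalence classes of geodesics across the rectangle (mod coalescing in a middle strip) has uniform stretched-exponential tails and in particular $O(1)$ expectation (Theorems~\ref{t:rare} and~\ref{t:multi2}), and one then uses a first-moment / averaging argument over a linear-size box of vertices near the origin (Lemma~\ref{l:sidetoside} and the proof of Proposition~\ref{p:o1}), followed by translation invariance, to extract the $O(n^{-1/3})$ bound at a single vertex. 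This averaging also neatly handles the non-opposite pairs of intervals that you do not account for, and it is why the paper works with finite geodesics from $\mathsf{Ent}_n$ to $\mathsf{Exit}_n$ rather than coalescing semi-infinite geodesics: the forward and backward halves of the putative bigeodesic are then automatically treated jointly on the common probability space, so the ``non-trivial backward analysis'' you flag simply does not arise. Finally, a minor logical point: Borel--Cantelli is not needed and would in any case not obviously be available (the per-scale probability $O(n^{-1/3})$ is not summable); it suffices that $\P(\ce_{n,h})\to 0$ along a subsequence, since $\ce_h$ implies $\ce_{n,h}$ for all large $n$.
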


\subsection{Background}
Kardar, Parisi, and Zhang predicted in their seminal work \cite{KPZ86} that a large class of randomly growing interfaces exhibit a universal behaviour that is now known as KPZ universality, including the longitudinal and transversal fluctuation exponents of $1/3$ and $2/3$ respectively. Directed last passage percolation and first passage percolation\footnote{Planar first passage percolation models are defined by putting i.i.d.\ non-negative weights on the edges of $\Z^2$ and setting the first passage time between two vertices to be equal to the weight of the minimum weight path (geodesic) between two vertices.} models on the plane are believed to belong to the KPZ universality class under very general conditions on the passage time distributions. However, the scaling exponents and the scaling limits have only been rigorously established for a handful of so-called integrable models (e.g.\ LPP with exponential or geometric weights) where exact distributional formulae for the passage times are available due to some remarkable bijections and connections with random matrix theory and orthogonal polynomials. Although some geometric consequences of the algebraic formulae were already studied by Johansson \cite{J00} who established the scaling exponent $2/3$ for transversal fluctuation in Poissonian LPP, sharper geometric estimates and interesting consequences thereof has only recently started being explored \cite{BSS14, BSS17++, BGH17,BG18}.

In another related but separate direction of works, a lot of progress has been made in studying planar first passage percolation, another model believed to be in the KPZ universality class that, however, is not known to be exactly solvable. In the absence of exact formulae, the study of first passage percolation has relied mostly on a geometric understanding of the geodesics, the study of which was initiated by Newman and co-authors as summarized in his ICM paper~\cite{New95} where certain coalescence results are established under curvature assumptions on the limit shape. Although much less is rigorously known, the connection between understanding properties of semi-infinite and bi-infinite geodesics, limit shapes and the KPZ predicted fluctuation exponents has been clear for some years.  Much progress has been made in recent years in understanding the geodesics starting with the idea of Hoffman~\cite{H08} of studying infinite geodesics using Busemann functions. These techniques have turned out to be extremely useful, providing a great deal of geometric information on the structure of geodesics in first passage percolation~\cite{DH14,DH17,AH16}. These techniques have also recently been applied to last passage percolation models with or without integrable structure \cite{GRS15,GRS15+, Pim16}.

The question of the existence of bigeodesics in planar first passage percolation has been one of the most important longstanding problems in the field. This question appeared in \cite{Kes86} where it is attributed to Furstenberg. See \cite[Section 5]{ADH15} for a more detailed account of the history of this problem and its connection to non-trivial ground state in Ising models with i.i.d.\ coupling. Although Benjamini and Tessera recently showed that bigeodesics do exist for first passage percolation on certain hyperbolic graphs \cite{BT17}, it is believed that under some mild conditions on the passage time distribution almost surely bigeodesics do not exist (observe that there are no trivial bigeodesics in the first passage percolation setting) for the two dimensional Euclidean lattice. However, it is only rigorously known that under certain regularity assumptions on the boundary of limit shape bigeodesics along fixed directions do not exist  \cite{DH14,DH17,AH16}. In this paper, we prove the nonexistence of bigeodesics for the exactly solvable model of exponential LPP, where, in addition to the limit shape being explicitly known, more information about the coalescence structure of finite geodesics can be obtained from the moderate deviation estimates available in the integrable probability literature.

\subsection{An outline of the Argument}
In an AIM workshop in 2015, Newman presented a heuristic argument for almost sure non-existence of bigeodesics in FPP predicated on the transversal fluctuation exponent $\xi > 1/2$; see \cite{ADH15}. Part of this paper follows the general outline of that argument with, however, some significant modifications and additional ingredients.
To implement this program we establish new results about the coalescence structure of geodesics in exponential last passage percolation, which are of independent interest and useful in other contexts as well~(see e.g.\ \cite{BSS17++}).

First observe the following: by translation invariance and ergodicity, we know that existence of a non-trivial bigeodesic is a $0-1$ event and hence it follows that if  almost surely non-trivial bigeodesics exist, then with positive probability there must exist non-trivial bigeodesics passing through the origin, denoted $\mathbf{0}$ (more generally for $r\in \Z$, we shall use $\mathbf{r}$ to denote the point $(r,r)$). We shall prove Theorem~\ref{t:bi} by showing that almost surely there does not exist any non-trivial bigeodesic passing through~$\mathbf{0}$. Let $\gamma=\{v_i\}_{i\in \Z}$ be a bi-infinite path passing through $\mathbf{0}$. Without loss of generality assume $v_0=\mathbf{0}$. Let us set $v_i:=(x_i,y_i)$. Observe that if $\gamma$ is a bigeodesic then $\gamma_{+}:=\{v_0,v_1,\ldots\}$ and $\gamma_{-}:=\{v_0,v_{-1},\ldots\}$ are both semi-infinite geodesic rays \footnote{Semi-infinite geodesics, or geodesic rays, are naturally defined as follows. A path $\gamma=\{v_i\}_{i\in \Z_{\geq 0}}$ is called a semi-infinite geodesic if $v_{i}\preceq v_{i+1}$ for all $i$ (or $v_{i+1}\preceq v_{i}$ for all $i$), and the restriction of $\gamma$ between $v_{i}$ and $v_{j}$ is a geodesic from $v_{i}$ to $v_{j}$ for all $i<j$ (resp.\ for all $i>j$).}. It is known \cite{New95,FP05} that almost surely every geodesic ray emanating from a fixed vertex has a direction, i.e., except on a set of zero probability $\lim_{i\to \infty} \frac{y_i}{x_i}:=h(\gamma_{+})\in [0,\infty]$ and $\lim_{i\to -\infty} \frac{y_i}{x_i}:=h(\gamma_{-})\in [0,\infty]$ exist \footnote{{Our arguments can be used to show that almost surely there does not exist a bigeodesic $\gamma$ with $h(\gamma_{+})\neq h(\gamma_{-})$ but this will not be of particular use to us.}}. For a bigeodesic $\gamma$ passing though $\mathbf{0}$ we shall call $h(\gamma_{+})$ and $h(\gamma_{-})$  the \textbf{forward limiting direction} and the \textbf{backward limiting direction} of $\gamma$ respectively. As already pointed out, the vertical and horizontal directions are somewhat special, we shall take care of them separately. For $h\in (0,1)$, let $\ce_{h}$ denote the event that there exists a bigeodesic passing though $\mathbf{0}$ such that either its forward limiting direction is in $(h,\frac{1}{h})$, or its backward limiting direction is in $(h,\frac{1}{h})$ (observe that such a geodesic must be non-trivial). Let $\ce_{*}$ denote the event that there exists a {non-trivial} bigeodesic $\gamma$ passing through the origin which has either $h(\gamma_{-})=h(\gamma_{+})=0$ (i.e., it is horizontally directed) or $h(\gamma_{-})=h(\gamma_{+})=\infty$ (i.e., it is vertically directed). It is immediate that Theorem \ref{t:bi} will follow from the next two propositions ({notice that the case that exactly one of $h(\gamma_{-})$ and $h(\gamma_{+})$ is in $\{0,\infty\}$ is covered by $\ce_{h}$ for $h$ sufficiently small}).

\begin{proposition}
\label{p:h}
For each $h\in (0,1)$, we have $\P(\ce_{h})=0$.
\end{proposition}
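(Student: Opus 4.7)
The plan is to argue by contradiction following the Newman-type heuristic outlined in Section 1.2, and to make each step quantitative using the moderate-deviation estimates for exponential LPP together with a matching coalescence bound for geodesics. Assume for contradiction that $\P(\ce_h)>0$ for some $h\in(0,1)$. The event that a fixed vertex $v\in\Z^2$ lies on some bigeodesic with forward or backward limiting direction in $(h,1/h)$ is translation covariant and has the same probability as $\ce_h$, so Fubini gives that the expected number of such \emph{bigeodesic vertices} in the square $B_N:=[-N,N]^2$ is $\Theta(N^2)$. The goal is to contradict this by producing an $o(N^2)$ upper bound on the same expectation.

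To produce such an upper bound I fix a scale $R=N^{1+\delta}$ for a small $\delta>0$ and track where bigeodesics through $B_N$ cross $\partial B_R$ on the forward and backward sides. Two quantitative inputs drive the argument, both of which ultimately come from the one-point moderate deviation estimates for exponential LPP. The first is a transversal fluctuation bound: any semi-infinite geodesic with direction $\theta\in(h,1/h)$ starting in $B_N$ exits $\partial B_R$ inside a window of size $R^{2/3+o(1)}$ around its deterministic mean crossing point, with failure probability polynomially small in $R$. The second and technically central input is a coalescence estimate: although the transversal fluctuation window alone would allow many candidate exits, a multi-scale argument shows that geodesics originating from different vertices of $B_N$ and heading in direction $\theta$ have already merged well before reaching scale $R$, so that the actual number of distinct exit points on $\partial B_R$ used by all such geodesics is much smaller than the naive window size. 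Making this coalescence statement quantitative is the principal technical difficulty and the piece of the argument with no counterpart in the classical FPP literature; I expect it to be the main obstacle.

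With the coalescence bound in hand, the counting is straightforward. Each pair $(p,q)$ of forward and backward exits on $\partial B_R$ is joined by the unique geodesic $\Gamma_{q,p}$, which meets $B_N$ in at most $O(N)$ vertices. Hence the expected number of bigeodesic vertices in $B_N$ is dominated by (number of distinct forward exits) $\times$ (number of distinct backward exits) $\times\, O(N)$. Substituting the forward and backward coalescence bounds and choosing $\delta>0$ sufficiently small makes this product $o(N^2)$, contradicting the density lower bound and forcing $\P(\ce_h)=0$. The open cone $(h,1/h)$ of directions is handled by a finite covering, and the mixed cases in which one of the two asymptotic directions of the bigeodesic is horizontal or vertical are treated by the same counting scheme, using the trivial $O(N)$ bound for the extreme side in place of the coalescence bound.
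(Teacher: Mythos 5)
Your overall strategy is in the right spirit---bound the expected number of ``bigeodesic vertices'' in a large box using coalescence of geodesics, and derive a contradiction from the translation-invariance lower bound of order $N^2$---and this is indeed the Newman heuristic that the paper implements. However, there are two substantive issues that separate your proposal from a proof.

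First, and most importantly, the coalescence estimate you invoke is not a side lemma: it \emph{is} the core technical content, and you leave it entirely unproved, explicitly labelling it ``the main obstacle''. The paper develops precisely this estimate over all of Section~\ref{s:rare}: Theorem~\ref{t:rare} (rarity of many disjoint geodesics across an $n\times n^{2/3}$ rectangle, proved via a BK-inequality/entropy argument in a grid decomposition), upgraded to Theorem~\ref{t:multi2} (most geodesics between intervals on opposite sides coalesce in the middle third), and then to Corollary~\ref{c:mean} and Lemma~\ref{l:sidetoside} (the expected number of vertices in a linear-size box lying on \emph{some} geodesic from $I$ to $J$ is $O(n)$). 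Without an argument here your proposal is a restatement of the heuristic rather than a proof. Moreover, the form you invoke---coalescence of \emph{semi-infinite} geodesics from many starting points ``heading in direction $\theta$''---is actually harder to control than what the paper uses, because semi-infinite directed geodesics carry exactly the correlation problems that the paper deliberately circumvents by working with finite geodesics between boundary intervals of $\partial S_n$ and then averaging.

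Second, your counting step is not sound as written. You bound the expected number of bigeodesic vertices by ``(number of distinct forward exits)~$\times$~(number of distinct backward exits)~$\times~O(N)$'', but the two exit counts are random variables that are strongly correlated with each other and with the geodesics' trajectories, so this product does not pass through the expectation. The clean way---and what the paper actually does---is to decompose $\partial B_R$ (or $\partial S_n$) into order-$R^{2/3}$ boundary intervals, use linearity of expectation over the $O(R^{2/3})$ compatible interval pairs $(I,J)$, and bound $\E N(I,J) = O(N)$ for each fixed pair by the coalescence estimate. That additive structure, plus a pigeonhole-and-translation-invariance step (rather than a direct contradiction), is what the paper's proof of Proposition~\ref{p:o1} carries out. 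In that framework the auxiliary scale $R = N^{1+\delta}$ you introduce is unnecessary: the paper works at $R \asymp n$ with the small box of linear size $O(nh)$ around the origin. In short, you have reconstructed the outline but not the proof: the coalescence estimate and the correct (additive, not multiplicative) bookkeeping are both missing, and both are essential.
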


\begin{proposition}
\label{p:axial}
We have $\P(\ce_{*})=0$.
\end{proposition}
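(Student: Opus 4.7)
By the distributional symmetry $(x,y)\mapsto(y,x)$ of the i.i.d.\ weight field, it suffices to treat the horizontal sub-case $h(\gamma_+)=h(\gamma_-)=0$. The forward ray $\gamma_+$ is then a semi-infinite geodesic from $\mathbf{0}$ with horizontal limiting direction; moreover, the antipodal reflection $v\mapsto -v$ (which again preserves the law of the weights) turns $\gamma_-$ into a semi-infinite geodesic from $\mathbf{0}$ with horizontal limiting direction in an environment of the same distribution. The plan is therefore to prove the following uniqueness statement: \emph{almost surely, the only semi-infinite geodesic from $\mathbf{0}$ with horizontal limiting direction is the positive $x$-axis $\{(n,0):n\ge 0\}$}. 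Together with its antipodal counterpart, this forces any $\gamma$ counted by $\ce_*$ (in the horizontal sub-case) to be the full $x$-axis through $\mathbf{0}$, a trivial bigeodesic, so no non-trivial such $\gamma$ exists.

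To establish the uniqueness statement, consider a candidate $\gamma_+=\{v_i=(x_i,y_i)\}_{i\ge 0}$ from $\mathbf{0}$ with $y_i/x_i\to 0$, and split into two cases according to whether the transversal coordinate $y_i$ stays bounded.

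\emph{Bounded case ($y_i\to L$ for some $L\in\Z_{\ge 0}$).} The case $L=0$ is the axis itself. If $L\ge 1$, let $\tau$ be the index of the last upward step, so $v_i=(x_\tau+i-\tau,L)$ for all $i\ge \tau$. Each truncation $\gamma_+|_{[0,\tau+m]}$ must coincide with the unique geodesic from $\mathbf{0}$ to $(x_\tau+m,L)$, which pins the topmost-row transition at $x$-coordinate $x_\tau$ for every $m\ge 0$. A short computation identifies this topmost transition as the argmax over $k$ of a random walk $W_k$ whose successive increments $\xi_{(k+1,L-1)}-\xi_{(k,L)}$ are i.i.d., symmetric, mean-zero, and of finite variance (conditionally on the lower-row transitions). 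Since $\sup_k W_k=+\infty$ almost surely for such a walk, its argmax over $[0,x_\tau+m]$ cannot remain at the fixed value $x_\tau$ as $m\to\infty$, a contradiction. For $L\ge 2$ one iterates level by level, conditioning on the lower-row transitions at each stage; equivalently, by a law of large numbers for last passage time, the top transition of the geodesic to $(M,L)$ is typically at position of order $M$ and in particular diverges.

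\emph{Unbounded case ($y_i\to\infty$ with $y_i/x_i\to 0$).} Here $\gamma_+$ takes infinitely many upward steps at positions $k_1<k_2<\cdots\to\infty$. The plan is to invoke the sharp moderate-deviation and transversal-fluctuation estimates for exponential LPP (the same integrable-probability input underpinning Proposition~\ref{p:h}) applied to geodesics to endpoints $(N,M)$ with $M\to\infty$ and $M/N\to 0$. The consistency requirement that every sufficiently far endpoint on $\gamma_+$ yield a geodesic passing through the single vertex $(k_L,L)$ at each level $L$ translates into a sequence of tail events indexed by $L$ which, with the right quantitative control of where near-horizontal geodesics cross fixed horizontal lines, can be shown to be jointly impossible via a Borel--Cantelli-type argument. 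This unbounded case is the main technical obstacle: the bounded case reduces to a soft random-walk argmax computation, but ruling out a semi-infinite geodesic whose diverging transversal coordinate is balanced by infinitely many widely-spaced upward steps requires genuine use of the integrable structure of the model.
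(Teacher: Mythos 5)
Your reduction to ray uniqueness is valid and the bounded case can be made to work, but the unbounded case --- which you yourself call ``the main technical obstacle'' --- is left as a sketch, and the sketch is off in two respects.

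On the bounded case, first note that you are proving strictly more than the paper needs. The paper's Lemma~\ref{l:fw} rules out finite-width \emph{bigeodesics} by a soft translation-invariance/ergodicity argument: for fixed $a<b$, an $(a,b)$ bigeodesic has a unique step from column $a$ to column $a+1$, occurring at some height $i$; the events $\mathcal{C}(a,b;i)$ are pairwise disjoint in $i$ but have equal probability by vertical translation invariance, forcing that probability to be zero. No argmax computation is required, and no statement about finite-width \emph{rays} (which are not a priori part of a bigeodesic) is needed. Your random-walk argument does establish the stronger claim that no ray from $\mathbf{0}$ with $y_i\to L\ge 1$ exists, but there is a small inaccuracy: for $L\ge 2$ the process $B(m)=T_{\mathbf{0},(m,L-1)}+\xi_{(m,L-1)}-\sum_{j<m}\xi_{(j,L)}$, whose running argmax locates the top transition, does \emph{not} have i.i.d.\ increments as you assert; its increments are only stochastically bounded below by the i.i.d.\ symmetric differences $\xi_{(m,L-1)}-\xi_{(m-1,L)}$, and one concludes $\sup_m B(m)=\infty$ a.s.\ by comparison with that dominated walk. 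That fix is easy, but the paper's soft argument sidesteps the issue entirely.

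On the unbounded case the gap is genuine. Two of your statements are incorrect. First, the integrable input is \emph{not} ``the same integrable-probability input underpinning Proposition~\ref{p:h}'': that proposition only needs moderate deviations with slope bounded away from the axes (Theorem~\ref{t:moddev}), whereas the axial case requires the separate estimate, uniform as $\varepsilon\to 0$, quoted from \cite{LR09} as Theorem~\ref{t:moddevex}. Second, a Borel--Cantelli scheme indexed by $L$ is not what closes the proof. The paper instead proves Theorem~\ref{l:ltf}, a \emph{local} transversal fluctuation bound for steep geodesics (built on Propositions~\ref{p:supsteep} and~\ref{p:tfbasic}, both of which need Theorem~\ref{t:moddevex}): at height $L$, the geodesic from $\mathbf{0}$ to $(\varepsilon n,n)$ deviates from $\varepsilon L$ by $O(\varepsilon^{2/3}L^{2/3})$. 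Then, in Lemma~\ref{l:ifw}, one fixes $C_0$ and takes $\varepsilon=C_0/L$, making the bound $\varepsilon L + O(\varepsilon^{2/3}L^{2/3}) = C_0 + O(C_0^{2/3})$ \emph{uniform in} $L$. Polymer ordering places the putative infinite-width ray to the left of the geodesic to $(\varepsilon n,n)$ for all large $n$, so its transversal coordinate at height $L$ stays $O(C_0)$ for every $L$, contradicting divergence. The essential device --- letting the test slope $\varepsilon$ shrink like $1/L$ precisely so that the resulting transversal bound is $L$-independent --- is absent from your sketch; without it there is no family of summable or jointly impossible events for a Borel--Cantelli argument to act on.
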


{Observe that the axial directions are special in LPP (in contrast to FPP) since the model is directed and trivial geodesics do exist in axial directions. Therefore we shall need a separate argument ruling out non-trivial bigeodesics which are axially directed.}  Let us, for now, focus on the situation of Proposition \ref{p:h}, and describe how this proposition is established following Newman's general heuristics for the bigeodesics problem in the FPP setting. Clearly it suffices to prove Proposition \ref{p:h} for $h$ sufficiently small. Let $S_{n}$ denote the square $[-n,n]^2 \cap \Z^2$. We shall denote the union of its left and bottom side by $\mathsf{Ent}_n$ and the union of its top and right side by $\mathsf{Exit}_{n}$. Observe that any bi-infinite path through $\mathbf{0}$ must enter $S_n$ through a point on $\mathsf{Ent}_n$, and exit $S_{n}$ via a point on $\mathsf{Exit}_{n}$. Clearly if the path is a bigeodesic, then its restriction to $S_n$ must give a geodesic between a point on $\mathsf{Ent}_n$ and a point on $\mathsf{Exit}_{n}$. Moreover, on $\ce_{h}$, one must also have that for all $n$ sufficiently large, the line joining the endpoints of the putative bigeodesic restricted to $S_n$ must have slope in  $(\frac{h}{2}, \frac{2}{h})$. Let $\ce_{n,h}$ denote the event that there exists points $u\in \mathsf{Ent}_n$ and $w\in \mathsf{Exit}_n$ such that $\mbox{slope}(u,w)\in (\frac{h}{2}, \frac{2}{h})$ (for $u\preceq w\in \Z^2$, $\mbox{slope}(u,w)$ shall denote the slope of the straight line joining $u$ and $w$) and $\mathbf{0}\in \Gamma_{u,w}$. Clearly if $\P(\ce_{h})>0$ then $\liminf_{n\to\infty} \P(\ce_{n,h})>0$. This is contradicted by the following proposition which, therefore, implies Proposition \ref{p:h}.

\begin{proposition}
\label{p:o1}
Let $h\in (0,1)$ be fixed. There exists $C=C(h)>0$ such that $\P(\ce_{n,h})\leq Cn^{-1/3}$ for infinitely many $n$.
\end{proposition}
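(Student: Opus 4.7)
The plan is to bound $\P(\ce_{n,h})$ via a first-moment estimate on the number of pairs $(u,v)$ witnessing $\mathbf{0}\in \Gamma_{u,v}$, combining coalescence of finite geodesics (which collapses the count of effective pairs) with transversal fluctuation bounds (which control the per-pair probability). Parameterising $u=(-n,a)$ and $v=(n,b)$, the slope condition places the ``directions'' $\theta_+=b/n$ and $\theta_-=-a/n$ in a compact subset of $(0,\infty)$, and an elementary computation gives $\mathrm{dist}(\mathbf{0},\overline{uv})=n|\theta_+-\theta_-|/2$. Heuristically,
\[
\P(\ce_{n,h}) \;\lesssim\; \underbrace{n^{1/3}}_{\text{\#fwd dirs}} \cdot \underbrace{n^{1/3}}_{\text{\#bwd dirs}} \cdot \underbrace{n^{-1/3}}_{\text{aligned fraction}} \cdot \underbrace{n^{-2/3}}_{\text{local hit}} \;=\; n^{-1/3}.
\]

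First I partition $\ce_{n,h}$ according to which of the four ordered pairs of edges incident to $\mathbf{0}$ the hypothetical geodesic traverses; by symmetry it suffices to control one subevent. Second, the coalescence estimates for exponential LPP to be established earlier in the paper imply that with high probability the forward tree of geodesics from $\mathbf{0}$ to the points of $\mathsf{Exit}_n$ with slopes in $(h/2,2/h)$ has at most $O(n^{1/3})$ distinct branches at the relevant scale, and similarly for the backward tree to $\mathsf{Ent}_n$. For $\mathbf{0}$ to lie on $\Gamma_{u,v}$, the segment $\overline{uv}$ must approach $\mathbf{0}$ within the transversal fluctuation scale $O(n^{2/3})$, equivalently $|\theta_+-\theta_-|=O(n^{-1/3})$, which cuts the set of aligned representative direction pairs down to $O(n^{1/3})$.

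Third, by the transversal fluctuation estimate for a geodesic of length $n$, the density of the geodesic midpoint at any specific location is $O(n^{-2/3})$; hence for each aligned representative pair one has $\P(\mathbf{0}\in \Gamma_{u,v}) \le O(n^{-2/3})$. Multiplying the two factors yields the desired bound $\P(\ce_{n,h})\le Cn^{-1/3}$, holding along the subsequence of $n$ for which the quantitative coalescence and transversal-fluctuation estimates used above are simultaneously valid without logarithmic slack; this is precisely the statement ``for infinitely many $n$''.

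The main obstacle is the quantitative coalescence step: controlling the number of distinct branches by $O(n^{1/3})$ with polynomially small error probability, rather than the weaker trivial bound $O(n)$. This requires the sharp one-point moderate deviation and transversal fluctuation estimates for exponential LPP from the integrable probability literature, used here in their full strength, and forms the technical heart of the paper.
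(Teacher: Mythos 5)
Your proposal follows Newman's original heuristic fairly literally, and it differs from the paper's actual argument in a way that leaves a real gap. The paper does \emph{not} attempt to prove a per-pair hit probability $\P(\mathbf{0}\in\Gamma_{u,v})\le O(n^{-2/3})$; instead (see Lemma~\ref{l:sidetoside}) it bounds the \emph{expected number of vertices in a linear-size box} $[-nh/100,nh/100]^2$ that lie on some geodesic between an $h$-compatible pair of boundary intervals $(I,J)$ by $Cn$. Summing over the $O(n^{2/3})$ compatible pairs gives expectation $O(n^{5/3})$, so by pigeonhole some vertex $v$ in the $\Theta(n^2)$-vertex box has hit probability $O(n^{-1/3})$; translating $v$ to $\mathbf{0}$ then produces $\P(\ce_{n',h})\le Cn^{-1/3}$ for some $n'\in[n,n+nh/100]$. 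The phrase ``for infinitely many $n$'' in the statement is an artifact of this translation step (it changes $n$ to $n'$), not of any ``subsequence on which estimates hold without logarithmic slack'' as you suggest --- that part of your explanation is incorrect.

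The concrete gap in your third step: the assertion that ``by the transversal fluctuation estimate for a geodesic of length $n$, the density of the geodesic midpoint at any specific location is $O(n^{-2/3})$'' does not follow from transversal fluctuation alone. Transversal fluctuation confines $\Gamma_{u,v}$ to a strip of width $O(n^{2/3})$ with high probability, but says nothing about how the hitting probability is distributed within that strip; a priori it could be sharply peaked at a few favourable vertices. To obtain a uniform $O(n^{-2/3})$ density one needs exactly the coalescence bound (Theorem~\ref{t:multi2}) combined with an averaging/translation-invariance argument --- this is precisely how the paper derives the midpoint-problem bound in Remark~\ref{r:midpoint}. So your step 3 silently assumes the conclusion of the averaging argument that the paper runs explicitly. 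Similarly, your handling of non-aligned pairs (``the contribution for such pairs should be negligible'') is not justified quantitatively; one would need a tail bound on the hit probability as a function of $\mathrm{dist}(\mathbf{0},\overline{uv})/n^{2/3}$ that decays fast enough to absorb the $O(n^{2/3})$ pairs. The paper sidesteps both issues simultaneously by the vertex-averaging argument: Lemma~\ref{l:sidetoside} applies uniformly to all $h$-compatible pairs regardless of alignment, and averaging over vertices near the origin replaces the per-pair hit probability estimate entirely. If you want to pursue your outline you should reorganise it around that averaging step rather than around pointwise hit probabilities.
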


Newman's heuristic for showing that $\P(\ce_{n,h})=o(1)$ is the following. Divide the intervals $\mathsf{Ent}_n$ and $\mathsf{Exit}_n$ into disjoint subintervals of length $n^{\xi}$ where $\xi$ is the transversal fluctuation exponent (known to be equal to $2/3$ in our case). For most pairs of  intervals $(I,J)$, the point $\mathbf{0}$ is ``far" (at the transversal fluctuation scale) from the straight lines joining points in $I$ to points in $J$, so the contribution for such pairs  should be negligible and the main contribution should come from the ``opposite pairs". Also for each pair of ``opposite" sub-intervals $I$ and $J$, $I\subseteq \mathsf{Ent}_n$, and $J\subseteq \mathsf{Exit}_n$, the geodesics from points in $I$ to points in $J$ ``should coalesce" and hence the chance of there being any geodesic passing through the origin should be $\approx n^{-\xi}$. Taking a union bound over $(n^{1-\xi})$ many pairs of opposite intervals, we should get the required probability bound as long as $\xi >1/2$.

There are a number of obvious issues with this heuristic, even if the transversal fluctuation exponent in known to be bigger than $1/2$, as was already pointed out in \cite{ADH15}. First, as was shown recently in \cite{Pim16, BSS17++} coalescence (of all geodesics) in an on-scale rectangle (i.e., an $n\times n^{2/3}$ rectangle) happens with positive probability, but not with high probability. Second, one needs to deal with the correlated events of coalescence and the geodesic passing through the origin. To circumvent these issues we show that even though all paths might not coalesce, most of the paths do (see Theorem \ref{t:multi2}). The other issue  is to deal with the contribution of the pairs of intervals that are not exactly opposite one another. This issue is circumvented by an averaging argument, where instead of looking at the probability of some geodesic passing through the origin we look at the average number of vertices near the origin that are on such geodesics.

{More precisely, we show in Lemma \ref{l:sidetoside} (a refinement of Theorem \ref{t:multi2}) that the for intervals $I\subset \mathsf{Ent}_{n}$ and $J\subset \mathsf{Exit}_{n}$, each of length $n^{2/3}$ such that the slope between the midpoints of $I$ and $J$ is not too extreme, the expected number of vertices in $[-\frac{nh}{100}, \frac{nh}{100}]^2 \cap \Z^2$ that lie on some geodesic from $I$ to $J$ is linear in $n$. Summing over $O(n^{2/3})$ pairs of $(I,J)$ gives that the expected number of vertices  in $[-\frac{nh}{100}, \frac{nh}{100}]^2 \cap \Z^2$ that lie on some geodesic between $\mathsf{Ent}_{n}$ to $\mathsf{Exit}_{n}$ such that the slope between its endpoints is not too extreme is $O(n^{5/3})$. Therefore the average probability that a vertex in $[-\frac{nh}{100}, \frac{nh}{100}]^2 \cap \Z^2$ lies on such a geodesic is $O(n^{-1/3})$. Arguing using translation invariance that the same estimate is true for the origin if $n$ is replaced by $n'$ for some $n'\in [n,n+\frac{nh}{100}]$ then completes the proof of Proposition \ref{p:o1}.}

\subsection*{Disjoint Geodesics}
En route to the proof of the coalescence of most geodesics (Theorem \ref{t:multi2}) alluded to above, we first establish a result of independent interest  concerning the number of disjoint geodesics across an on-scale rectangle, i.e., a rectangle of size $n\times n^{2/3}$ whose sides of length $n$ are parallel to the line $x=y$ (time direction) and the sides of length $n^{2/3}$ is parallel to the line $x+y=0$ (space direction).  The result says essentially says that the maximum number of disjoint geodesics from one side of the rectangle to the other is tight at $O(1)$ scale, and has nice stretched exponential tails. Rarity of disjoint geodesics is a question of independent interest, and has been investigated in \cite{H17a} in the context of Brownian last passage percolation using the Brownian Gibbs property of \cite{CH14}. Showing that a large number of disjoint geodesics is sufficiently rare has a number of applications. In this paper, we shall use this result to prove Proposition \ref{p:o1}. In \cite{BSS17++}, this is used to prove optimal tail estimates for distance to coalescence for semi-infinite geodesics started at distinct points. For applications in studying the locally Brownian nature of Airy processes, see \cite{H16, H17a, H17b, H17c}.

For $r\in \Z$, let $\mathbb{L}_{r}$ denote the line $x+y=2r$. Let $A_{n}$ (resp.\ $B_{n}$) denote the line segment on $\mathbb{L}_{0}$ (resp.\ $\mathbb{L}_{n}$) of length $2n^{2/3}$ with midpoint $\mathbf{0}$ (resp.\ $\mathbf{n}$). For points $u,v$ on $A_n$ (or on $B_{n}$) we say $u<v$ if $v=u+i(-1,1)$ for some $i\in \N$. For $\ell\in \N$, let $\widetilde{\ce}_{\ell}$ denote the event 
that there exists $u_1< u_2 < \cdots < u_{\ell}$ on $A_{n}$, and $v_1< v_2 < \cdots < v_{\ell}$ on $B_{n}$, such that the geodesics  $\Gamma_{u_i,v_i}$ are disjoint. The next theorem is the second main result of this paper.

\begin{maintheorem}
\label{t:rare}
There exists constant $n_0,\ell_0\in \N$ such that for all $n>n_{0}$ and for all $\ell _0 <\ell < n^{0.01}$ we have
$$ \P(\widetilde{\ce}_{\ell})\leq e^{-c\ell^{1/4}}$$
for some absolute constant $c>0$.
\end{maintheorem}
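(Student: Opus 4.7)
The plan is to show that the existence of $\ell$ disjoint geodesics crossing the on-scale rectangle $A_n \to B_n$ forces many atypical moderate-deviation events to occur simultaneously, whose combined probability is at most $\exp(-c\ell^{1/4})$.

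First I would extract a structured set of witnesses. On $\ce_{\ell}$ fix endpoint pairs $(u_i, v_i)_{i=1}^\ell$ with $u_1 < \cdots < u_\ell$ on $A_n$ and $v_1 < \cdots < v_\ell$ on $B_n$; since disjoint geodesics have distinct endpoints, the average consecutive spacing on both sides is $O(n^{2/3}/\ell)$, and after a pigeonhole one may restrict to a comparable number of indices for which both $|u_{i+1} - u_i|$ and $|v_{i+1} - v_i|$ are of this order. Also, using the transversal-fluctuation estimate (Proposition~\ref{p:tfori}), I would work on the high-probability event that each of these geodesics stays within an $O(n^{2/3})$ antidiagonal tube of its straight-line endpoint interpolant.

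The main analytic tool is a swap inequality. For any two consecutive pairs, the two ``crossed'' geodesics $\Gamma_{u_i, v_{i+1}}$ and $\Gamma_{u_{i+1}, v_i}$ must intersect by planarity (their endpoints are interleaved in the antidiagonal direction), and splicing them at any common vertex produces admissible up/right paths realising $u_i \to v_i$ and $u_{i+1} \to v_{i+1}$, giving
\[
T_{u_i, v_{i+1}} + T_{u_{i+1}, v_i} \;\le\; T_{u_i, v_i} + T_{u_{i+1}, v_{i+1}} + O(1).
\]
The technical heart of the proof is then to exploit \emph{disjointness} of the original pair $\Gamma_{u_i, v_i}$, $\Gamma_{u_{i+1}, v_{i+1}}$ to upgrade this to a quantitative lower bound on the ``gap''
\[
G_i \;:=\; T_{u_i, v_i} + T_{u_{i+1}, v_{i+1}} - T_{u_i, v_{i+1}} - T_{u_{i+1}, v_i},
\]
forcing $G_i$ to be at least of order the local fluctuation scale associated with the endpoint spacing $s \sim n^{2/3}/\ell$. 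Heuristically, at this scale the rescaled passage-time profile is Airy-sheet-like and its discrete Hessian degenerates only along coalescing-geodesic pairs; disjointness should therefore translate into a strictly positive Hessian, quantified via the moderate deviation estimates (Theorem~\ref{t:moddev} and consequences in Propositions~\ref{t:treesup}, \ref{t:treeinf}, \ref{t:treetilted}).

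To convert the per-pair gap bound into the stretched-exponential bound, I would partition the $\ell-1$ consecutive pairs into $k \asymp \ell^{1/4}$ groups of ``well-separated'' pairs, using the transversal fluctuation input to confine the geometric objects relevant to different groups to essentially disjoint patches of $\Z^2$. Within each group, the event that the summed $G_i$'s exceed a fixed multiple of the typical fluctuation scale is a moderate-deviation event with probability bounded by a constant strictly less than $1$. A BK-type decoupling across groups (or a direct conditioning argument on the transversal-fluctuation confinement event) then yields
\[
\P(\ce_{\ell}) \;\le\; e^{-ck} \;\le\; e^{-c\ell^{1/4}}.
\]

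The main obstacle is the quantitative disjointness-to-gap step: turning the purely topological condition that two geodesics share no vertex into a genuine lower bound on $G_i$ at the correct local scale. This is where the integrable inputs are used most delicately, and it is this step that ultimately dictates the exponent $1/4$ in the bound; finer control of the local curvature of $T$ (analogous to Hammond's use of the Brownian Gibbs property in the Brownian LPP setting \cite{H17a}) would be needed to push beyond this exponent. The BK-style decoupling in the last step is also subtle, since passage times over nearby pairs share many underlying edge weights and genuine independence is only approximate.
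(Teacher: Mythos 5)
Your proposal takes a genuinely different route from the paper and, as written, has two gaps that I don't see how to close.

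The paper's argument goes as follows. Lemma~\ref{l:rec} and Lemma~\ref{l:inf} reduce the problem to bounding $\P(\cg_\ell)$, the event that there are $\ell$ disjoint up/right paths contained in the rectangle $\mathbf{R}$ (of width $2\ell^{1/8}n^{2/3}$), each of weight at least $4n - c_1\ell^{1/4}n^{1/3}$. The rectangle is then tiled by an $h\times\ell^{1/8}m$ grid with $h=\sqrt{\ell}$ and $m\asymp h^{2/3}$, each path is encoded by the sequence $J$ of cells it visits at each grid level, and Lemma~\ref{l:thin1} shows that the best path with a fixed encoding is typically short: $\ell(\gamma_J)\le 4n - c_1 h^{2/3}n^{1/3}$ except with probability $e^{-ch^{1/2}}$. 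Since ``a path of weight $\geq t$ threading the cells of $J$'' is an \emph{increasing} event of the vertex weights, and disjoint paths realize a genuine disjoint occurrence, the BK inequality multiplies to give $e^{-c\ell^{5/4}}$ per tuple $(J_1,\dots,J_\ell)$; Lemma~\ref{l:count} then shows the monotone-tuple entropy is only $\ell^{\ell^{23/24+\epsilon}}$, which is dominated. The $\ell^{1/4}$ in the final bound comes from Lemmas~\ref{l:rec} and~\ref{l:inf}, not from the grid step.

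Your route instead tries to extract, from each pair of consecutive disjoint geodesics, a quantitatively positive quadrangle gap $G_i = T_{u_i,v_i}+T_{u_{i+1},v_{i+1}}-T_{u_i,v_{i+1}}-T_{u_{i+1},v_i}$ and then to aggregate these. Two steps do not go through. First, the disjointness-to-gap step: $G_i\ge 0$ always by the standard splicing argument (it is an equality, not ``$+O(1)$''), and disjointness of the straight pair does force $G_i>0$, but there is no reason why $G_i$ must be of order the local fluctuation scale $\sim (n^{2/3}/\ell)^{?}$. Disjointness is a purely topological condition; the two geodesics can be ``just barely'' disjoint and $G_i$ arbitrarily small. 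You flag this as the heart of the matter and gesture at an Airy-sheet Hessian heuristic, but that is not an argument, and I do not believe the claimed lower bound holds pairwise. The rarity here is a collective effect of $\ell$ disjoint paths, not a pointwise statement about each consecutive pair. Second, the decoupling step: the events $\{G_i\ \text{large}\}$ are differences of passage times, hence neither increasing nor decreasing, and they involve heavily overlapping sets of vertex weights, so the BK inequality simply does not apply to them, and you have not proposed a substitute. The paper sidesteps both issues at once: constraining a path to a thin tube of cells is an honest moderate-deviation penalty via Lemma~\ref{l:mean}/\ref{l:thin1}, and ``disjoint long paths'' is exactly the setting where BK is available in its textbook form, with the combinatorial cost absorbed by a sequence-counting bound.
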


Observe that Theorem \ref{t:rare} immediately implies that if $N_{n}$ denotes the maximum number of pairwise disjoint geodesics from points on $A_n$ to points on $B_{n}$, then we have $\E N_{n}\leq C$ for some absolute constant $C$. A variant of this can be used to obtain the optimal upper bound for the so-called midpoint problem; see Remark \ref{r:midpoint}.

\subsection*{The axial directions}
Before wrapping up this section, let us present a brief outline of the argument for proving Proposition \ref{p:axial}. Let us only consider the vertical direction. Simple translation invariance and ergodicity considerations show that there cannot exist a vertically directed bigeodesic which only moves finitely many steps in the horizontal direction. {Indeed, for $a<b\in \Z$, clearly almost surely there can be at most one such geodesic that moves from the line $x=a$ to the line $x=b$. By translation invariance, it follows that the location of the first jump of such a geodesic to the right of the line $x=a$ is invariant under vertical translations which leads to a contradiction (see Lemma \ref{l:fw})}. So it suffices to show that there cannot exist any semi-infinite geodesic started from the origin directed vertically upwards that moves infinitely many steps to the right. We prove this by contradiction. If such a geodesic exists with positive probability, then with positive probability it will also take $M$ rightward steps before $L$ upward steps for some large $M$ and large $L$ depending on $M$. 

To rule this out, we establish the following two results. First we show that for $\varepsilon$ arbitrarily small the transversal fluctuation of the geodesic 
from $\mathbf{0}$ to $(\varepsilon n, n)$ is $O(\varepsilon^{2/3}n^{2/3})$ with high probability (see Proposition \ref{p:tfbasic}); this generalizes Johansson's transversal fluctuation result \cite{J00} to steep geodesics. We further prove a local version of the above transversal fluctuation result showing that the local transversal fluctuation of the geodesic from $\mathbf{0}$ to $(\varepsilon n, n)$ at height $L$ is $O(\varepsilon^{2/3}L^{2/3})$ (see Theorem \ref{l:ltf}). This generalizes Theorem 3 of \cite{BSS17++}, where a similar result was proved for $\varepsilon$ bounded away from $0$ and $\infty$.

Once we have these results at our disposal we can simply take $\varepsilon$ sufficiently small depending on $L$, and argue that if the geodesic from $\mathbf{0}$ to $(\varepsilon n, n)$ took $M$ rightward steps before $L$ upward steps then it would have atypically large transversal fluctuation at height $L$. Observing that any semi-infinite geodesic started at $\mathbf{0}$ and directed vertically upward will be to the left of the geodesic $\mathbf{0}$ to $(\varepsilon n, n)$ for all $n$ sufficiently large completes the proof. 


\subsection*{Notes on Subsequent Results}
{In the two years since this paper was completed and posted on arXiv (in November 2018) there has been a number of results in related problems using both the ideas in this article as well as independent methods. In September 2019, the paper \cite{BBS19} was posted where the authors provided a different proof of Theorem \ref{t:bi} using a stationary LPP and exact formulae for joint distribution of Busemann increments. In \cite{Ale20}, the methods of this paper where adapted and extended to treat the case of first passage percolation in general dimension under the unverified assumptions of curvature of limit shape and one point moderate deviation estimates at the standard deviation scale (these are analogues of results that are known in the exponential LPP case, see Theorem \ref{t:onepoint} below). Finally, in \cite{BGHH20}, a variant of Theorem \ref{t:rare} was derived under a generic class of assumptions (curvature of limit shape and one point moderate deviation estimates being the primary ones) on the underlying LPP models en route studying the $k$-geodesic watermelon, the maximal collection of $k$ disjoint paths with maximal total weight.}

\subsection*{Other Integrable Models}
{We worked with the specific integrable model of planar exponential LPP in this paper; however, it will be clear to the reader that we have used little information specific to the exponential model. We believe that our methods can be adapted for other integrable models of last passage percolation as well; this is reflected in our choice of the title for this paper. Indeed, except for the weak convergence for passage times to the GUE Tracy-Widom distribution, the primary ingredient we need from the model is the one point moderate deviation estimates for the last passage times (Theorem \ref{t:onepoint}) and its consequences on tails of passage times across an on scale parallelogram (Theorem \ref{t:supinf}). Analogues of Theorem \ref{t:onepoint} exist in the literature for all known planar models of integrable LPP (see \cite{LM01,LMS02} for Poissonian LPP, \cite{BXX01,Jo99} for geometric LPP, and \cite{LR09} for Brownian LPP). The proof of Theorem \ref{t:supinf} using Theorem \ref{t:onepoint} can be imitated to obtain analogous results for all known models of integrable last passage percolation (This was done in the context of Poissonian LPP in \cite{BSS14}, see \cite{BGHH20} for similar results in geometric LPP and \cite{GH20} for results of a similar flavour in Brownian LPP). To refrain from relying on unpublished results, and to keep the exposition succinct, we shall not pursue any of these other models in this paper. However, we reiterate that we believe that our arguments proving Theorem \ref{t:bi} can be adapted to prove analogous results for the other three integrable models of planar LPP. Notice that, for Poissonian and geometric LPP, some additional arguments will be needed to deal with the nonuniqueness of geodesics (uniqueness of geodesic is used in some proofs in Section \ref{s:quick}).}  


\subsection*{Organization of the paper}
The rest of the paper is organized as follows. In Section \ref{s:prelim}, we recall the known basic inputs from integrable probability that we use throughout the paper, and state a couple of new results (Theorem \ref{p:tfbasic} and Theorem \ref{l:ltf}) to deal with axially directed bigeodesics. In Section \ref{s:rare} we prove Theorem \ref{t:rare} (and its generalization Proposition \ref{c:rare}) and a useful consequence Theorem \ref{t:multi2}. In Section \ref{s:h}, we complete the proof of Proposition \ref{p:o1} using Theorem \ref{t:multi2}. In Section \ref{s:axial}, we complete the proof of Theorem \ref{t:bi} by establishing Proposition \ref{p:axial} using Theorem \ref{p:tfbasic} and Theorem \ref{l:ltf} and provide the proofs of the latter two results. 

\subsection*{Acknowledgements}
We thank Manjunath Krishnapur for pointing us to the reference \cite{LR09}, Daniel Ahlberg for useful discussions at the early stage of the project, and Yiping Hu for a careful reading of the paper. RB is partially supported by a Ramanujan Fellowship (SB/S2/RJN-097/2017) from the Science and Engineering Research Board, an ICTS-Simons Junior Faculty Fellowship, DAE project no. RTI4001 via ICTS and Infosys Foundation via the Infosys-Chandrasekharan Virtual Centre for Random Geometry of TIFR. CH is supported by a Simons fellowship and NSF Grants DMS-1712701 and DMS-1954059. AS is supported by NSF grant DMS-1855527, a Simons Investigator grant and a MacArthur Fellowship. Part of this research was performed during a visit of RB to the Princeton Mathematics department whose hospitality he gratefully acknowledges.

\section{Inputs from Integrable Probability and their consequences}
\label{s:prelim}
This paper falls within the general program of understanding the geometry of geodesics in exactly solvable models of last passage percolation using inputs from integrable probability initiated in \cite{BSS14} and continued in \cite{BSS17++, BGH17, BG18, BGZ19}. As such, we use the same integrable inputs, and their consequences developed in these papers, primarily \cite{BSS14}. In this section, we collect all such results. Note that the arguments in \cite{BSS14} were written in the set-up of the exactly solvable model of Poissonian LPP although essentially the same arguments go through for exponential LPP. However, for the sake of completeness, and to avoid citing results that are as yet unpublished, we shall mostly quote these inputs from \cite{BGZ19} where the arguments from \cite{BSS14} are reproduced in detail in the set-up of exponential LPP.


\subsection{One point convergence and moderate deviation estimates}
The two fundamental ingredients are the convergence of the rescaled passage time for the Exponential LPP \cite{Jo99} and a moderate deviation estimate for the same \cite{LR09}.\footnote{Strictly speaking, one usually proves such results in the model of Exponential LPP where the weight of the last vertex is also included in the definition of $T$. However for large $n$ this does not make any difference and we shall ignore this issue henceforth.}
First we recall the Tracy-Widom convergence result. 

\begin{theorem}[{\cite[Theorem 1.6]{Jo99}}]
\label{t:tw}
For each $h\in (0,\infty)$ we have 
$$ \dfrac{T_{\mathbf{0}, (n,hn)}- n(1+\sqrt{h})^{2}}{h^{-1/6}n^{1/3}} \stackrel{d}{\rightarrow} F_{TW}$$
as $n\to \infty$ where $F_{TW}$ denotes the GUE Tracy-Widom distribution.
\end{theorem}

We shall have limited use for the scaling limit except for the well-known fact that GUE Tracy-Widom distribution has negative mean ({see \cite[Lemma A.4]{BGHH20} for a proof}). For our purposes the following moderate deviation estimates will be of paramount importance.

\begin{theorem}[{\cite[Theorem 2]{LR09}}]
\label{t:onepoint}
For each $\psi>1$ There exists $C,c>0$ depending on $\psi$ such that for all $m,n,r\geq 1$ with $\psi^{-1}<\frac{m}{n}< \psi$ and all $x>0$ we have the following:
\begin{enumerate}
\item[(i)] $\P(T_{\mathbf{0}, (m,n)}-(\sqrt{m}+\sqrt{n})^{2} \geq xn^{1/3}) \leq Ce^{-c\min\{x^{3/2},xn^{1/3}\}}$.
\item[(ii)] $\P(T_{\mathbf{0}, (m,n)}-(\sqrt{m}+\sqrt{n})^{2} \leq -xn^{1/3}) \leq Ce^{-cx^3}$.
\end{enumerate}
\end{theorem}

Observe that Theorem \ref{t:onepoint} implies that 
\begin{equation}
\label{e:mean}
|\E T_{\mathbf{0}, (m,n)} -(\sqrt{m}+\sqrt{n})^2|\leq C'n^{1/3}
\end{equation}
for some constant $C'$ depending only on $\psi$. To deal with the steep geodesics (see  Section \ref{s:steep} and Section \ref{s:axial}) we shall also need some amount of control on the tails of the passage time when $m/n$ is not bounded away from $0$ and $\infty$. This is provided by \cite[Theorem 2]{LR09} as well. In particular, we have: for $m\geq n\geq 1$ sufficiently large and for all $x>0$
\begin{equation}
\label{e:steep}
\P(T_{\mathbf{0}, (m,n)}-(\sqrt{m}+\sqrt{n})^{2} \geq xm^{1/2}n^{-1/6}) \leq Ce^{-cx}.
\end{equation}
See the discussion after \cite[(3.17)]{LR09} for the case $x> m^{-1/2}n^{1/6}(\sqrt{m}+\sqrt{n})^2)$. For $x\in (0,  m^{-1/2}n^{1/6}(\sqrt{m}+\sqrt{n})^2))$, we also get from \cite[Theorem 2]{LR09} that
\begin{equation}
\label{e:steep2}
\P(T_{\mathbf{0}, (m,n)}-(\sqrt{m}+\sqrt{n})^{2} \leq -xm^{1/2}n^{-1/6}) \leq Ce^{-cx^2}.
\end{equation}
Notice that these together imply 
\begin{equation}
\label{e:meansteep}
|\E T_{\mathbf{0}, (m,n)} -(\sqrt{m}+\sqrt{n})^2|\leq Cm^{1/2}n^{-1/6}
\end{equation}
for all $m\geq n\geq 1$ for some $C>0$.
%

\subsection{Passage times across parallelograms}
One of the most useful consequences of Theorem \ref{t:onepoint} for our purposes will be a similar bound on minimum and maximum passage times across rectangles and parallelograms of dimension $n\times n^{2/3}$. This estimate was developed in \cite{BSS14} for Poissonian LPP; the version we quote here is taken from \cite{BGZ19}. 

Let $U_0$ and $U_{r}$ line segments of length $2r^{2/3}$ on the lines $\{x+y=0\}$ and $\{x+y=2r\}$ respectively with midpoints $(mr^{2/3},-mr^{2/3})$ and $\mathbf{r}$ respectively. The following theorem is the key consequence of Theorem \ref{t:onepoint} for our purposes. 

\begin{theorem}[{\cite[Theorem 4.2]{BGZ19}}]
\label{t:supinf}
For each $\psi<1$, there exists $C,c>0$ depending only on $\psi$ such that for all $|m|<\psi r^{1/3}$ and $U$ as above we have
\begin{enumerate}
\item[(i)] 
for all $x>0$ and $r\ge 1$,
$$\P\left( \inf_{u\in U_0,v\in U_r}  (T_{u,v}-\E T_{u,v}) \leq -xr^{1/3}\right)\leq Ce^{-cx^3}.$$
\item[(ii)]
for all $x>0$ and $r\geq 1$,
$$\P\left( \sup_{u\in U_0,v\in U_r}  (T_{u,v}-\E T_{u,v}) \geq xr^{1/3}\right)\leq Ce^{-c\min\{x^{3/2},xr^{1/3}\}}.$$
\end{enumerate}
\end{theorem}


\subsection{Transversal Fluctuation estimates}
As already mentioned, the transversal fluctuation exponent in exponential LPP is known to be equal to $2/3$, we shall need a quantitative upper bound to that effect. The following result is taken from \cite{BGZ19} which in turn was proved by adapting the arguments appearing in \cite[Theorem 11.1, Corollary 11.7]{BSS14}. 

\begin{theorem}[{\cite[Proposition C.9]{BGZ19}}]
\label{p:tf}
Let $\cA_\phi$ denote the event that the geodesic from $(mr^{2/3},-mr^{2/3})$ to $\mathbf{r}$ exits the strip of width $\phi r^{2/3}$ around the straightline joining the endpoints. For each $\psi<1$, there exist $C,c>0$ such that for all $|m|\leq \psi r^{1/3}$ and $\phi>0$, $r\geq 1$,
$$\P(\cA_{\phi})\leq Ce^{-c\phi^3}.$$
\end{theorem}

\subsection{Estimates for steep geodesics}
\label{s:steep}
To deal with the axial directions we also need to control the transversal fluctuation of geodesics joining two points such the straight line joining them has slope arbitrarily close to $0$ or $\infty$. Unlike the previous results stated in this section, these have has not appeared before in the literature, so we shall provide complete proofs. However, the proofs are quite similar to the results quoted above using \eqref{e:steep}, \eqref{e:steep2} instead of Theorem \ref{t:onepoint}.

We first make the following definitions. For any path $\gamma$ from $\mathbf{0}$ to $(\varepsilon n, n)$, let the local transversal fluctuation of $\gamma$ at length scale $L$ be
$$TF_{L}(\gamma):=\sup\{(x-\varepsilon L)_{+}: (x,L)\in \gamma\}.$$
Our first result controls the transversal fluctuation of geodesics from $\mathbf{0}$ to points near $(\varepsilon n,n)$ at scales close to $\frac{n}{2}$.

\begin{theorem}
\label{p:tfbasic}
There exist constants $C_0,c,x_0, \varepsilon_0>0$ such that for each $\varepsilon\in (\frac{C_0}{n}, \varepsilon_0)$, $m\in (\frac{\varepsilon}{10}, 10 \varepsilon)$, $n$ sufficiently large and $x>x_0$, we have the following. Let $\Gamma$ denote the geodesic from $\mathbf{0}$ to $(mn,n)$. Then we have for each $L\in [\frac{n}{4}, \frac{n}{2}]$,
$$\P(TF_{L}(\Gamma) \geq x\varepsilon^{2/3}n^{2/3})\leq e^{-cx}$$
for some $c>0$. 
\end{theorem}

\begin{remark}
\label{r:tfsteep}
Using Theorem \ref{p:tfbasic} and the chaining argument used in the proof of \cite[Proposition C.9]{BGZ19} one can also upper bound the global transversal fluctuations. i.e., for the geodesic $\Gamma$ from $\mathbf{0}$ to $(\varepsilon n,n)$, one can show that 
$\sup_{0\leq L\leq n} TF_{L}(\Gamma)=O(\varepsilon^{2/3}n^{2/3})$ with large probability as was conjectured in \cite{BM05}. However, we shall not need this result and would refrain from providing the details. This maybe taken up elsewhere. 
\end{remark}

The final result we need will control the local transversal fluctuation of the geodesic from $\mathbf{0}$ to  $(\varepsilon n,n)$ at some large (but $\ll n$) length scale $L$. 

\begin{theorem}
\label{l:ltf}
There exists $\varepsilon_0,x_0,c>0$ and $C_0, L_0, N_0\in \N$ with $C_0<\varepsilon_0L_0$ such that for all $n>N_0$, $L\in (L_0,\frac{n}{8})$, $x>x_0$ and $\frac{C_0}{L}\leq \varepsilon<\varepsilon_0$, we have the following: if $\Gamma$ is the geodesic from $\mathbf{0}$ to $(\varepsilon n, n)$, then
$$\P(TF_{L}(\Gamma) \geq x\varepsilon^{2/3}L^{2/3})\leq e^{-{cx^{1/3}}}.$$
\end{theorem}

Notice that for the case $\varepsilon$ bounded away from $0$, an analogue of Theorem \ref{l:ltf} was proved in \cite[Theorem 3]{BSS17++}. Proofs of Theorem \ref{p:tfbasic} and Theorem \ref{l:ltf} will be provided in Section \ref{s:axial}.

\section{Rarity of Multiple Disjoint Geodesics}
\label{s:rare}

The objective of this section is to prove Theorem \ref{t:rare} and use it to prove Theorem \ref{t:multi2} which will ultimately be applied in the proof of Proposition \ref{p:h}. 

Before delving into the details of the proof of Theorem \ref{t:rare}, let us briefly explain the idea. Recall the basic set up: $A_{n}$ and $B_{n}$ are anti-diagonal line segments of length $2n^{2/3}$ with centres $\mathbf{0}$ and $\mathbf{n}$ respectively, and we want to assert that the maximum number of pairwise disjoint geodesics is typically not too large. First we shall show that the length of the geodesic from any point on $A_{n}$ to any point in $B_{n}$ is unlikely to be too small, i.e., even the minimum geodesic length is typically $4n-\Theta(n^{1/3})$. Now the question is reduced to showing that it is unlikely to have a large number of disjoint paths from $A_n$ to $B_n$ that have length at least $4n-Cn^{1/3}$ for some large $C$ (depending on $\ell$). {Notice that applying the BK inequality directly is not immediately helpful, as for $C$ large the probability of having one geodesic with weight $\geq 4n-Cn^{1/3}$ is rather close to $1$. To circumvent this issue, we shall discretize the set of all paths from $A_{n}$ to $B_{n}$, such that that the typical weight for the best path in any given discretization is much smaller than $4n-Cn^{1/3}$. It turns out that the probability of having $\ell$ many such paths which are atypically large compared to the typical value in the corresponding discretization is sufficiently unlikely so that we can sum over all possible discretizations to get the desired bound in Theorem \ref{t:rare}}.

More specifically we do the following. First, using Theorem \ref{p:tf}, observe that we can restrict to paths completely contained in the rectangle $R_{n}$ whose one pair of opposite sides $U_n$ and $V_n$ are along $A_{n}$ and $B_{n}$ with the same midpoints but have lengths $2\ell^{1/8}n^{2/3}$. For some $s>0$ consider the intersections of the lines $x+y=\frac{in}{s}$ with the rectangle $R_{n}$ (note that the notation in this section is independent of the rest of the paper and the parameter $h$ here used locally has no connection with $h$ used in the statement of Proposition \ref{p:h}). Partition these line segments (of length $2\ell^{1/8}n^{2/3}$) into $\ell^{1/8}t$ segments of equal length \footnote{{Without loss of generality, we shall throughout ignore the rounding issues and omit the floor signs to reduce notational overhead, the reader can easily check that it does not affect any of the arguments in a non-trivial way.}}. We now discretize all possible geodesics from $A_{n}$ to $B_{n}$ (contained in $R_{n}$) according to which of the $t$ segments it intersects for each $i=1,2,\ldots, h$. {If a path is forced to go through a fixed sequence of intervals, it incurs a penalty in its weight due to Theorem \ref{t:tw}, Theorem \ref{t:supinf} and the fact that the mean of the GUE Tracy-Widom distribution is negative. It follows that if $t$ is sufficiently large compared to $s^{2/3}$, the weight of the highest weight path corresponding to a fixed discretization is typically much smaller than a typical geodesic from $A_{n}$ to $B_{n}$; this is the content of Lemma \ref{l:thin1plus}}. Now, we can use the BK inequality to conclude that $\ell$ many such paths with sufficiently high weight existing disjointly is very unlikely. As any set of disjoint paths must be ordered, we can get a good control on the entropy of the size of $\ell$-tuples of possible discretization (counted in Lemma \ref{l:count}), and it turns out that the probability bound coming from the BK inequality is sufficiently small to beat the union bound over all possible $\ell$-tuples of discretizations (see Proposition \ref{p:multi}).

We now move towards making this argument precise. For technical convenience for applications in this paper as well as other applications, we shall prove a more general version of Theorem \ref{t:rare}; there are two ways in which the next result will generalize Theorem \ref{t:rare}. First, instead of considering intervals  $A_n$ and $B_{n}$ centred at points $\mathbf{0}$ and $\mathbf{n}$, the line joining which has slope one, we shall consider the two midpoints with more general slopes. Also, we shall consider intervals whose lengths will be allowed to grow with $\ell$. More specifically, Let $A'_{n}$ (resp.\ $B'_n$) be line segments of length $2\ell^{1/16}n^{2/3}$ for some parallel to the line $x+y=0$ with midpoints $(-mn^{2/3},mn^{2/3})$ and $\mathbf{n}$ respectively. Let $\ce'_{\ell}$ denote the event that there exists $u_1< u_2 < \cdots < u_{\ell}$ on $A'_{n}$, and $v_1< v_2< \cdots  < v_{\ell}$ on $B'_{n}$, such that the geodesics  $\Gamma_{u_i,v_i}$ are disjoint. We have the following result.


\begin{proposition}
\label{c:rare}
For each $\psi<1$, there exists $n_0,\ell_0>0$, such that for all $n>n_0, n^{0.01}>\ell>\ell_0$ and all $m$ with $|m|+\ell^{1/8}<\psi n^{1/3}$ we have 
$\P(\ce'_{\ell})\leq e^{-c\ell^{1/4}}$
for some $c>0$.
\end{proposition}

Clearly Proposition \ref{c:rare} implies Theorem \ref{t:rare}. Hence it suffices to prove only the former, and that is what we move towards now. Proposition \ref{c:rare} is used later to prove Theorem \ref{t:multi2}, and also used in \cite{BSS17++}.
 

Let us fix $n$ be sufficiently large for now and let $\ell < n^{0.01}$ be also fixed and sufficiently large. Let $U_{n}$ and $V_{n}$ be the line segments on $\mathbb{L}_{0}$ and $\mathbb{L}_{n}$ of length $2\ell ^{1/8}n^{2/3}$ with midpoints $(-mn^{2/3},mn^{2/3})$ and $\mathbf{n}$ respectively. Let $R_n$ denote the parallelogram one of whose pairs of opposite sides are $U_{n}$ and $V_{n}$. The following lemma says that geodesics from $A'_{n}$ to $B'_{n}$ will typically be completely contained in $R_{n}$.

\begin{lemma}
\label{l:rec}
Let $\cf_{\ell}$ denote the event that there exist $u\in A'_n$ and $v\in B'_n$ such that $\Gamma_{u,v}$ exits $R_n$. Then for $n,\ell$ and $m$ as in the statement of Proposition \ref{c:rare}, we have $\P(\cf_{\ell})\leq e^{-c\ell^{1/4}}$ for some $c>0$.
\end{lemma}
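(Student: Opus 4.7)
The plan is to use the standard planarity/monotonicity of directed LPP geodesics to reduce $\cf_{\ell}$ to a statement about just two ``extreme'' geodesics, and then apply the transversal fluctuation (TF) bound of Proposition~\ref{p:tfori} to each.

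Let $u^{-}=(n^{2/3},-n^{2/3})$ and $u^{+}=(-n^{2/3},n^{2/3})$ denote the two endpoints of $A_n$, and define $v^{\pm}$ analogously for $B_n$. The standard LPP monotonicity (the uncrossing argument using a.s.\ uniqueness of geodesics under continuous weights) states: if $u_1 < u_2$ on $\mathbb{L}_0$ and $v_1 < v_2$ on $\mathbb{L}_n$ in the antidiagonal order defined in the text, then at every antidiagonal level between $\mathbb{L}_0$ and $\mathbb{L}_n$, the geodesic $\Gamma_{u_1,v_1}$ lies weakly lower-right of $\Gamma_{u_2,v_2}$. Consequently, every $\Gamma_{u,v}$ with $u \in A_n$, $v \in B_n$ is sandwiched between $\Gamma_{u^{+},v^{+}}$ (upper-left bound) and $\Gamma_{u^{-},v^{-}}$ (lower-right bound). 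Since $\mathbf{R}$ is the slab between $\mathbb{L}_0$ and $\mathbb{L}_n$ intersected with $\{|y-x| \le 2\ell^{1/8} n^{2/3}\}$ and is convex in this antidiagonal-sandwich sense,
\[
\cf_{\ell} \ \subseteq\ \{\Gamma_{u^{+},v^{+}} \not\subset \mathbf{R}\}\ \cup\ \{\Gamma_{u^{-},v^{-}} \not\subset \mathbf{R}\}.
\]

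Both endpoints of $\Gamma_{u^{+},v^{+}}$ lie on the line $y-x = 2n^{2/3}$, so by translation invariance of the i.i.d.\ weights its law coincides with that of $\Gamma_{\mathbf{0},\mathbf{n}}$, and its TF measured from the line through its endpoints is distributed exactly as the TF of $\Gamma_{\mathbf{0},\mathbf{n}}$ from the diagonal. For $\Gamma_{u^{+},v^{+}}$ to exit $\mathbf{R}$, this TF must be at least $(\ell^{1/8}-1)n^{2/3} \ge \tfrac{1}{2}\ell^{1/8} n^{2/3}$ (for $\ell$ sufficiently large). Invoking Proposition~\ref{p:tfori} at the scale $r \asymp \ell^{1/8}$ produces a stretched-exponential bound $e^{-c\ell^{\alpha/8}}$, where $\alpha$ is the exponent of the TF tail from \cite{BSS14}. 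The identical bound applies to $\Gamma_{u^{-},v^{-}}$ by symmetry, and a union bound yields $\P(\cf_{\ell}) \le 2 e^{-c\ell^{\alpha/8}}$.

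The only thing to verify is that the TF exponent $\alpha$ satisfies $\alpha \ge 2$, since that is exactly what is needed to dominate $\ell^{1/4}$ by $\ell^{\alpha/8}$ at the deviation scale $\ell^{1/8} n^{2/3}$; the standard tail from \cite{BSS14} is in fact cubic ($\alpha=3$), so the target $e^{-c\ell^{1/4}}$ holds comfortably. Thus, beyond the monotonicity reduction, the lemma is really an immediate consequence of a single standard input, and the only substantive step is recognizing the correct two-geodesic sandwich so that no union bound over the $\Theta(n^{4/3})$ pairs $(u,v) \in A_n \times B_n$ is needed.
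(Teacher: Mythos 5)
Your proof is correct and uses exactly the same approach as the paper: reduce via polymer ordering to the two extreme geodesics $\Gamma_{u^-,v^-}$ and $\Gamma_{u^+,v^+}$, then apply the transversal fluctuation bound of Proposition~\ref{p:tfori} at deviation scale $s\asymp \ell^{1/8}$, which gives $e^{-cs^2}=e^{-c'\ell^{1/4}}$. (Minor point: the tail exponent stated in Proposition~\ref{p:tfori} is $2$, not $3$; the cubic exponent is the optimal one noted in \cite{BSS17++}, but $\alpha=2$ is exactly what is needed here.)
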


\begin{proof}
Let $u_0$ and $u'_0$ (resp.\ $v_0$ and $v'_0$) denote the smallest and the largest vertices of $A'_n$ (resp.\ $B'_n$) in the order defined above. It is easy to see that all $\Gamma_{u,v}$'s (for $u\in A'_n, v\in B'_n$) are sandwiched between $\Gamma_{u_0,v_0}$ and $\Gamma_{u'_0,v'_0}$; this fact is often referred to as polymer ordering. So it suffices to upper bound the probability that $\Gamma_{u_0,v_0}$ or $\Gamma_{u'_0,v'_0}$ will exit $R_n$. Noticing that the slope condition in Theorem \ref{p:tf} is satisfied for both $\Gamma_{u_0,v_0}$ and $\Gamma_{u'_0,v'_0}$, the lemma now follows from applying the same.
\end{proof}

\begin{figure}[htbp!]
\centering
\includegraphics[width=0.4\textwidth]{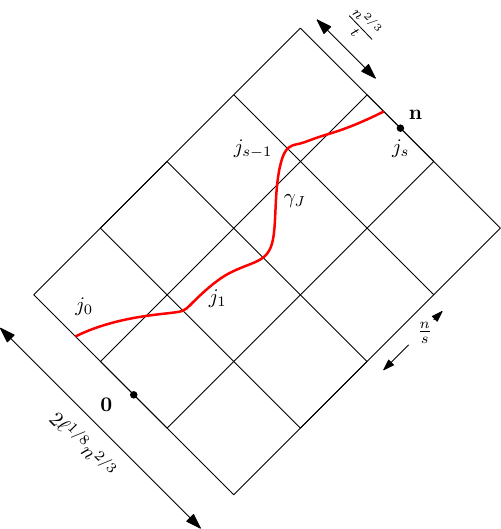}
\caption{The construction for the proof of Proposition \ref{p:multi} in the special case $m=0$: we divide the $n\times 2\ell^{1/8}n^{2/3}$ rectangle $R_{n}$ into an $s\times 2\ell^{1/8}t$ grid of subrectangles of size $(\frac{n}{s})\times (\frac{n^{2/3}}{t})$ where $t$ is chosen to be sufficiently large compared to $s^{2/3}$. To show that having too many disjoint paths across $R_n$, none of which has weight much smaller than typical, we fix a sequence $J=\{j_0,j_1, \ldots, j_{s}\}$ that encodes a path crosses different lines of the grid. We denote by $\gamma_{J}$ the best path with encoding $J$ (after centering). Lemma \ref{l:thin1plus} shows that for appropriate choices of $s$ and $t$, $\ell(\gamma_{J})$ is likely to be rather small, and an application of the BK inequality makes the occurrence of $\ell$ such disjoint paths sufficiently unlikely such that a union bound over all possible $\ell$ tuples of $J$  finishes the proof of Proposition \ref{p:multi}.}
\label{f:grid}
\end{figure}

The next lemma shall show that none of the geodesic lengths from $A'_{n}$ to $B'_n$ can be too small. For $u=(u_1,u_2)\in \Z_{\geq 0}^2$, let us define the function $S(u)=(\sqrt{u_1}+\sqrt{u_2})^{2}$. Recall that, \eqref{e:mean} bounds $|\E T_{u,v}-S(v-u)|$ provided the straight line joining $u$ and $v$ has slope bounded away from $0$ and $\infty$. The next lemma shall show that none of the geodesic lengths from $u\in A'_{n}$ to $v\in B'_n$ is likely to be too small compared to $S(v-u)$.

\begin{lemma}
\label{l:inf}
For each fixed constant $c_1>0$, there exists $c>0$ such that for all $n,\ell$ and $m$ as in the statement of Proposition \ref{c:rare}, we have
$$\P\left(\inf_{u\in A'_{n}, v\in B'_{n}} T_{u,v}-S(v-u) \leq -c_1\ell^{1/4}n^{1/3}\right) \leq  e^{-c\ell^{3/4}}.$$
\end{lemma}

\begin{proof}
Let $\mathscr{I}$ and $\mathscr{J}$ denote the partition of $A'_{n}$ and $B'_{n}$ into intervals of length $2n^{2/3}$. It follows from our hypothesis on $m$ and $\ell$ that for each $I\in \mathscr{I}$ and $J\in \mathscr{J}$, Theorem \ref{t:supinf}, (i) applies to 
$$ \inf_{u\in I, v\in J} T_{u,v}-\E T_{u,v},$$
with a possibly increased value of $\psi$. The result now follows from observing that by \eqref{e:mean}, we have for $u\in I, v\in J$, $|\E T_{u,v}-S(v-u)|\leq Cn^{1/3}$ for some $C>0$ and taking a union bound over all pairs $(I,J)$. 
\end{proof}

Let $\cg_{\ell}$ denote the event that there exists $u_1< u_2 < \cdots < u_{\ell}$ on $A'_{n}$, and $v_1< v_2 < \cdots <v_{\ell}$ on $B'_{n}$, and disjoint paths $\gamma_{i}$ joining $u_i$ and $v_i$ contained in $R_n$ such that $\ell(\gamma_{i}) \geq S(v_i-u_i) -c_1\ell^{1/4} n^{1/3}$. In view of Lemma \ref{l:rec} and Lemma \ref{l:inf} the following proposition suffices to prove Proposition \ref{c:rare}.

\begin{proposition}
\label{p:multi}
In the above set-up, we have $\P(\cg_{\ell})\leq e^{-c\ell^{1/4}}$.
\end{proposition}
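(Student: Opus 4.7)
The plan is to follow Newman's thin-tube heuristic outlined above, rigorizing it via entropy bounds and moderate-deviation estimates for constrained passage times. First I would use polymer ordering in exponential LPP to arrange the disjoint paths $\gamma_1, \ldots, \gamma_\ell$ so that at every anti-diagonal height, the crossing point of $\gamma_i$ lies weakly below that of $\gamma_{i+1}$. For each $k$, the even-indexed path $\gamma_{2k}$ is then sandwiched between $\gamma_{2k-1}$ and $\gamma_{2k+1}$; let $\cT_k$ denote the random region between these two bounding paths (with the boundaries removed). The $\cT_k$ for different $k$ are pairwise disjoint as vertex sets, and their transversal widths $W_{\cT_k}(h)$ at any height $h$ telescope:
$$\sum_{k} W_{\cT_k}(h) \;\leq\; \gamma_\ell(h) - \gamma_1(h) \;\leq\; 2\ell^{1/8}n^{2/3}.$$
Hence the average tube width is $O(\ell^{-7/8} n^{2/3})$, much narrower than the on-scale width $n^{2/3}$.

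To convert this into a usable bound I would discretize: partition the anti-diagonal direction of $\mathbf{R}$ into $N = \mathrm{poly}(\ell)$ equal-width strips, and replace each random tube $\cT_k$ by the smallest deterministic union of strips $\widehat{\cT}_k$ containing $\cT_k$. The number of configurations of $\lfloor\ell/2\rfloor$ pairwise disjoint such deterministic tubes is at most $\exp(O(\ell \log \ell))$. For any fixed configuration $(\widehat{\cT}_1, \ldots, \widehat{\cT}_{\ell/2})$, the events ``there exists a directed path in $\widehat{\cT}_k$ from some $u \in A_n$ to some $v \in B_n$ of weight at least $4n - c_1\ell^{1/4}n^{1/3}$'' are increasing and depend on disjoint vertex weights, so the BK inequality (in fact plain independence here) gives a product bound. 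A union bound over configurations then yields
$$\P(\cg_\ell) \;\leq\; e^{O(\ell \log \ell)} \cdot \sup_{(\widehat{\cT}_k)} \prod_{k=1}^{\ell/2} \P\bigl(\exists \text{ long path in } \widehat{\cT}_k\bigr).$$

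Each factor would be bounded by the tail estimate for the maximum passage time of a path confined to a thin tube: for a tube of width $w \ll n^{2/3}$ and length $\Theta(n)$, the maximum weight of any directed path in it is typically much less than $4n - O(n^{1/3})$, with stretched-exponential tails above this typical value; such estimates follow from the moderate-deviation machinery of \cite{BSS14} together with the constrained-geodesic analysis of \cite{BGH17,BG18}. The main obstacle is now twofold: first, establishing the required per-tube tail estimate with a sharp enough exponent in the relevant moderate-deviation range; second, controlling the product of per-tube bounds subject to the width-sum constraint $\sum_k w_k \leq O(\ell^{1/8} n^{2/3})$. By convexity of the tail exponent (Jensen's inequality), the product is worst-case when all widths are equal to the average $O(\ell^{-7/8} n^{2/3})$, giving $\lfloor \ell/2 \rfloor$ factors each with an exponent polynomial in $\ell^{7/8}$, which should comfortably dominate both the entropy $e^{O(\ell \log \ell)}$ and the target bound $e^{-c\ell^{1/4}}$.
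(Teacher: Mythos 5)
Your high-level framework — discretize into pairwise disjoint thin regions, invoke a thin-tube passage-time estimate per region, use BK (or independence on disjoint vertex sets) to get a product bound, and finish with an entropy-versus-decay union bound — is the same general plan as the paper's. However, there is a genuine gap in your discretization step. You define $\widehat{\cT}_k$ as the smallest union of (full-length, diagonal-parallel) strips containing the random tube $\cT_k$. This means the width of $\widehat{\cT}_k$ is governed by $\max_{h} W_{\cT_k}(h)$, i.e., the overall transversal extent of $\cT_k$, not its typical width. But the bounding paths $\gamma_{2k\pm 1}$ can wiggle transversally by $\Theta(n^{2/3})$, so even though your telescoping bound controls $\sum_k W_{\cT_k}(h)$ for each fixed $h$, it gives no control on $\max_h W_{\cT_k}(h)$ for any individual $k$. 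In the worst case $\widehat{\cT}_k$ is on-scale wide, in which case the constrained passage-time estimate gives no gain whatsoever, and the argument collapses.

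The paper circumvents exactly this issue by recording, for each path $\gamma_i$, its position at $h+1 \approx \sqrt{\ell}$ equally spaced intermediate levels via the encoding $J_i = (j_0^{(i)},\ldots,j_h^{(i)})$, with the level count and segment width tuned jointly (via $h = \sqrt{\ell}$, $m = h^{2/3}/c_0$). The ``tube'' is then a chain of order-one sub-parallelograms that follows the path's wiggle, and Lemma~\ref{l:thin1} gives a level-by-level thin-path estimate summed via Bernstein's inequality. This makes the entropy somewhat larger than your $\exp(O(\ell\log\ell))$ claim suggests — the paper carefully bounds it as $\exp\bigl(\tilde{O}(\ell^{23/24})\bigr)$ via the ordering constraint on tuples $(J_1,\ldots,J_\ell)$ in Lemma~\ref{l:count} — but the per-configuration decay $e^{-c\ell^{5/4}}$ still dominates. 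Without some device that tracks the paths' transversal positions at many intermediate heights, your constant-width-strip discretization cannot capture the thin-tube constraint, and the proof does not go through. Your remark that the per-tube estimates could be sourced from the constrained-geodesic literature is reasonable in spirit, but the paper's Lemma~\ref{l:thin1} is tailored precisely to the multi-level-pinned formulation and is what makes the quantitative bookkeeping close.
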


We shall need some preparation to prove Proposition \ref{p:multi}. We shall divide the rectangle $R_n$ into an $s\times \ell^{1/8}t$ grid of sub-rectangles, see Figure \ref{f:grid} for an illustration in the special case $m=0$. We shall choose a suitable $s$ and $t$ later. More precisely, consider lines $\mathbf{L}_{i}$ with slope $-1$ equally spaced with internal spacing $\frac{2n}{s}$ such that $\mathbf{L}_{0}=\mathbb{L}_{0}$ and $\mathbf{L}_{s}=\mathbb{L}_{n}$. Observe that each of these lines intersects $R_n$ in a line segment of length $2\ell^{1/8}n^{2/3}$. Abusing notation let $\mathbf{L}_{i}$ denote those line segments, partition the line segment $\mathbf{L}_i$ into $2\ell^{1/8}t$ many equally spaced line segment $L_{i,j}$ each of length $\frac{n^{2/3}}{t}$.

Our next objective is the following. Fix a sequence $J:=\{j_0, j_1,j_2,\ldots , j_{s-1}, j_{s}\}$ taking values in $[-\ell^{1/8}t, \ell^{1/8}t)\cap \Z$. For $u\in I_{j_0}, v\in I_{j_s}$, let $\mathscr{P}(u,v,J)$ denote the set of all paths from $u$ to $v$ that passes through the line segment $L_{i,j_{i}}$ for each $i=0,1,\ldots, s$. Let $\gamma_{J}$ denote the path that maximizes
$$\ell(\gamma)-S(v-u)$$
over all $\gamma\in \mathscr{P}(u,v,J)$ and over all $u\in I_{j_0},v\in I_{j_s}$; let $u_{J}$ and $v_{J}$ denote the starting and ending point of $\gamma_{J}$. We shall show that for suitable choices of parameters $\ell(\gamma_{J})$ is typically much smaller than $S(v_{J}-u_{J})$; more specifically we have the following lemma.

%

\begin{lemma}
\label{l:thin1plus}
For any $\psi<1$, there exists $c_0>0$ sufficiently small and $c_1>0$ such that for all $n,\ell,m$ as in the set-up of Proposition \ref{c:rare}, for all $s\leq \sqrt{\ell}$ sufficiently large and $c_0t=s^{2/3}$, we have for each $J$ as above and $\gamma_{J}$ as above with  endpoints $u_{J}$ and $v_{J}$
$$\P(\ell(\gamma_{J})-S(v_{J}-u_{J})\geq -c_1 s^{2/3}n^{1/3})\leq e^{-cs^{1/2}}$$
for some $c=c(\psi)>0$.
\end{lemma}

Let us postpone the proof of Lemma \ref{l:thin1plus} for now and use it first to complete the proof of Proposition \ref{p:multi}. Let $c_0$ be fixed such that the conclusion of Lemma \ref{l:thin1plus} holds, and let us set $s=\sqrt{\ell}$ and $t=\frac{s^{2/3}}{c_0}$. We need to control the entropy of the  $\ell$-tuples  $(J_1, J_2, \ldots, J_{\ell})$ of sequences associated with $\ell$ disjoint paths as is predicated to exist on the event $\cg_{\ell}$. Let $\cJ$ denote the set of all sequences $J$ as described above. For any path $\gamma$ from $A'_{n}$ to $B'_{n}$, let $J(\gamma)=(j_{0}, \ldots, j_{s})$ denote the element in $\cJ$ such that $\gamma$ passes through the line segment $L_{i,j_{i}}$ for each $i=0,1,\ldots, s$. To this end we have the following lemma.

\begin{lemma}
\label{l:count}
There exists a deterministic set $\mathcal{C}=\mathcal{C}_{\ell,t,s} \subseteq \cJ ^{\ell}$ with
$$|\mathcal{C}|\leq (\ell+2\ell^{1/8}t)^{2\ell^{1/8}t(s+1)},$$
such that on the event $\cg_{\ell}$, there exists $(J_1,J_2,\ldots, J_{\ell})\in \mathcal{C}$ such that
$J(\gamma_i)=J_{i}$ for each $i=1,2,\ldots ,\ell$.
\end{lemma}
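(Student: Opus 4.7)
The plan is to construct $\mathcal{C}$ by recording, for each of the $\ell$ paths on $\mathcal{G}_\ell$, the sequence of grid segments $L_{i,j}$ it crosses, and then to compress this encoding using the ordering forced by disjointness.

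More precisely, fix a realization in $\mathcal{G}_{\ell}$ with disjoint paths $\gamma_1,\ldots,\gamma_\ell$ contained in $\mathbf{R}$, ordered so that their starting points on $A_n$ (equivalently, their endpoints on $B_n$) are increasing in the order defined on $A_n$. Each $\gamma_i$ intersects every line segment $\mathbf{L}_k$ ($0\le k\le h$) in at least one point, lying inside some segment $L_{k,j_k^{(i)}}$; breaking ties by an arbitrary deterministic rule, this defines $J_i=(j_0^{(i)},\ldots,j_h^{(i)})\in\mathcal{J}$. Since $\gamma_i$ is an up/right path from $A_n$ to $B_n$ passing through each $L_{k,j_k^{(i)}}$, the definition of $\gamma_{J_i}$ as the best such path immediately gives $\ell(\gamma_i)\le \ell(\gamma_{J_i})$.

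The main point is to bound the number of $\ell$-tuples $(J_1,\ldots,J_\ell)$ that can arise. Here I would use the elementary topological fact that $\ell$ pairwise disjoint up/right lattice paths with ordered endpoints must remain in that order along every monotone antidiagonal; applied to $\mathbf{L}_k$ this gives the weak monotonicity $j_k^{(1)}\le j_k^{(2)}\le\cdots\le j_k^{(\ell)}$ for each $k$ (this is the same ``polymer ordering'' invoked in Lemma~\ref{l:rec}). Equivalently, for each $k$ the tuple $(j_k^{(1)},\ldots,j_k^{(\ell)})$ is encoded by the function $f_k:\{-\ell^{1/8}m,\ldots,\ell^{1/8}m-1\}\to\{0,1,\ldots,\ell\}$ that records how many of the paths cross $\mathbf{L}_k$ inside each of its $2\ell^{1/8}m$ sub-segments.

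The number of such functions per line is at most $(\ell+1)^{2\ell^{1/8}m}$, and there are $h+1$ lines, so the set $\mathcal{C}$ of $\ell$-tuples $(J_1,\ldots,J_\ell)$ realisable from some admissible choice of $(f_0,\ldots,f_h)$ has cardinality at most
\[
(\ell+1)^{2\ell^{1/8}m(h+1)}\ \le\ (\ell+2\ell^{1/8}m)^{2\ell^{1/8}m(h+1)},
\]
which is the desired bound. The only substantive step is the monotonicity of crossings, but this is a standard consequence of disjointness and monotonicity of the paths, so I don't anticipate a real obstacle; the rest is a combinatorial count.
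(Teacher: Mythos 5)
Your proof is correct and follows essentially the same route as the paper: both define the encoding $J_i$ from the segments crossed, observe $\ell(\gamma_i)\le\ell(\gamma_{J_i})$ by definition of $\gamma_{J_i}$, and use the polymer-ordering monotonicity $j_k^{(1)}\le\cdots\le j_k^{(\ell)}$ to reduce to counting weakly increasing $\ell$-tuples on each of the $h+1$ lines. The only cosmetic difference is in the final count: the paper takes the difference sequence and bounds by $\binom{\ell+2\ell^{1/8}m}{2\ell^{1/8}m}$ per line, whereas you use the dual stars-and-bars encoding (how many paths fall in each bin) and bound by $(\ell+1)^{2\ell^{1/8}m}$ per line; both are dominated by $(\ell+2\ell^{1/8}m)^{2\ell^{1/8}m}$ per line, giving the claimed bound after raising to the power $h+1$.
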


\begin{proof}
On $\cg_{\ell}$, let $\gamma_1,\gamma_2, \ldots, \gamma_{\ell}$ be a naturally ordered set of disjoint paths as given by the definition of the event.
For each  $i\in \{1,2,\ldots , \ell\}$, let $J_{i}=(j^{(i)}_{0}, \ldots, j^{(i)}_{s})=J(\gamma_i)$ be the element of $\cJ$ such that $\gamma_{i}$ intersects $L_{k,j^{(i)}_{k}}$ for each $k$. We need to bound the total number of all possible such tuples $(J_1,J_2,\ldots, J_{\ell})$. Observe that the ordering implies, if $i_1<i_2$, we must have $j^{(i_1)}_{k}\leq j^{(i_2)}_{k}$ for each $k$.
 It follows that $\mathcal{C}$ can be enumerated by picking $(s+1)$ many the non-decreasing sequences of length $\ell$ where each co-ordinate takes values in $-\ell^{1/8}t$ to $\ell^{1/8}t$. So our task is reduced to enumerating integer sequences $-\ell^{1/8}t \leq y_1 \leq y_2 \leq \cdots \leq  y_{\ell} \leq \ell^{1/8}t$. By looking at the difference sequence $z_{k}=(y_{k}-y_{k-1})$ (and setting $y_0=-\ell^{1/8}t$) this reduces to enumerating {non-negative} integer sequence $\{z_i\}$ with $z_1+z_2+\cdots +z_{\ell} \leq 2\ell^{1/8}t$. It is a standard counting exercise ({see, e.g.\ the discussion following \cite[(1.19)]{Sta11}}) to see that number of such sequences is bounded by $\binom {\ell+2\ell^{1/8}t}{2\ell^{1/8}t}$. By choosing $(s+1)$ many such sequences, we get an upper bound of $\binom {\ell+2\ell^{1/8}t}{2\ell^{1/8}t}^{s+1}$ and the result follows.
\end{proof}

We can now complete the proof of Proposition \ref{p:multi}.

\begin{proof}[Proof of Proposition \ref{p:multi}]
For a fixed sufficiently large $\ell<n^{0.01}$, set $s=\ell^{1/2}$ and let $t=\frac{s^{2/3}}{c_0}$ as in the statement of Lemma \ref{l:thin1plus}. For $\mathcal{C}$ as in Lemma \ref{l:count} and $(J_1,J_2,\ldots , J_{\ell})\in \mathcal{C}$ let $A_{J_1,J_2,\ldots , J_{\ell}}$ denote the event that there exist disjoint paths $\gamma_1, \gamma_2, \ldots, \gamma_{\ell}$ satisfying the condition in the definition of $\cg_{\ell}$ with $J_i=J(\gamma_i)$ (in particular, by definition of $\gamma_{J_{i}}$ and $\cg_{\ell}$ this implies that $\ell(\gamma_{J_{i}})-S(v_{J_i}-u_{J_i}) \geq -c_1\ell^{1/4}n^{1/3}$ where $u_{J_{i}}$ and $v_{J_{i}}$ are starting an ending point of $\gamma_{J_{i}}$ respectively). Using Lemma \ref{l:count}, it follows that $\P(\cg_{\ell})$ is upper bounded by
$$\sum_{(J_1,J_2,\ldots , J_{\ell})\in \mathcal{C}} \P(A_{J_1,J_2,\ldots , J_{\ell}}).$$

Now observe that for any path $\gamma$ from $u$ to $v$, the event that $\ell(\gamma)-S(v-u) \geq -c_1\ell^{1/4}n^{1/3}$ is increasing in the vertex weights and hence by the BK inequality ({see \cite{AGH18} for the variant used here}) the probability of a number of such events happening disjointly is upper bounded by the product of the marginal probabilities. Hence we have 
$$\P(A_{J_1,J_2,\ldots , J_{\ell}}) \leq \prod_{i=1}^{\ell} \P\left(\ell(\gamma_{J_{i}})-S(v_{J_i}-u_{J_i})\geq -c_1\ell^{1/4}n^{1/3}\right) \leq e^{-c\ell^{5/4}}$$
where the final inequality follows from Lemma \ref{l:thin1plus}. By Lemma \ref{l:count} (and our choices of $t$ and $s$) it follows that for any $\epsilon>0$ we have $|\mathcal{C}|\leq \ell^{\ell^{23/24+\epsilon}}$ and hence the result follows, by summing over all elements of $\mathcal{C}$.
\end{proof}

\subsection{Proof of Lemma \ref{l:thin1plus}}
We shall now provide the proof of Lemma \ref{l:thin1plus}. Let and $s,t$ and $J$ be fixed as in the statement of the lemma. For any $\gamma\in \mathscr{P}(u,v,J)$, setting $u_0=u$ and $u_{s}=v$, and $u_{i}$ to be the point where $\gamma$ intersects $L_{i,j_{i}}$ we have
$$ \ell(\gamma)- S(v-u) = \sum_{i=0}^{s-1}\left(T_{u_i,u_{i+1}}-S(u_{i+1}-u_{i})\right)+ \sum_{i=0}^{s-1}S(u_{i+1}-u_{i}) -S(u_{s}-u_0).$$
It therefore follows that for $\gamma_{J}$ as in the statement of the lemma with endpoints $u_{J}$ and $v_{J}$ we have
$$\ell(\gamma_J)- S(v_{J}-u_{J})\leq \sum_{i=0}^{s-1}\left( \sup_{u_{i}\in L_{i,j_i}, v_{i}\in L_{i+1,j_{i+1}}} T_{u_i,v_i}-S(v_i-u_i)\right)+\mathcal{L}(J)$$
where 
$$\mathcal{L}(J):= \sup_{u_{i}\in L_{i,j}} \sum_{i=0}^{s-1} S(u_{i+1}-u_i) - S(u_s-u_0).$$
We shall prove Lemma \ref{l:thin1plus} by controlling each of the terms above separately; $\mathcal{L}(J)$ is easy to control.

\begin{lemma}
\label{l:concave}
In the above set-up, for all $J$ as above we have 
$$\mathcal{L}(J)\leq 0.$$ 
\end{lemma}

\begin{proof}
This follows immediately by observing that $S(u)=(\sqrt{u_1}+\sqrt{u_2})^2$ is a concave function on $\R^2_{+}$. 
\end{proof}

The next task is to control the terms $\sup_{u_{i}\in L_{i,j_i}, v_{i}\in L_{i+1,j_{i+1}}} T_{u_i,v_i}-S(v_i-u_i)$ for each $i$. Towards this we have the following lemma.





\begin{lemma}
\label{l:mean}
In the above set-up, with $s\leq \sqrt{\ell}$ sufficiently large, for $c_0$ sufficiently small (depending on $\psi$) and $t=\frac{s^{2/3}}{c_0}$, we have for each $J$ and for each $i\in \{0,1,\ldots, s-1\}$, 
$$\E \left[\sup_{u_{i}\in L_{i,j_i}, v_{i}\in L_{i+1,j_{i+1}}} T_{u_i,v_i}-S(v_i-u_i) \right] \leq -C' (n/s)^{1/3}$$
for some $C'>0$.
\end{lemma}

Postponing the proof of Lemma \ref{l:mean} for now, let us first complete the proof of Lemma \ref{l:thin1plus}. Fix $s\leq \sqrt{\ell}$ sufficiently large, $c_0$ sufficiently small and $t=\frac{s^{2/3}}{c_0}$ such that the conclusion of Lemma \ref{l:mean} holds. Let us also fix sequence $J:=\{j_0, j_1,j_2,\ldots , j_{s-1}, j_{s}\}$ taking values in $[-\ell^{1/8}t, \ell^{1/8}t)\cap \Z$, and recall that $\gamma_{J}$ denotes the path from $A'_n$ to $B'_{n}$ that passes through the line segment $L_{i,j_{i}}$ for each $i=0,1,\ldots, s$ and maximizes 
$\sup_{u\in A'_n,v\in B'_n} \ell(\gamma)-S(v-u)$ among all such paths with end points $u$ and $v$ and also over all possible pairs $u,v$.

\begin{proof}[Proof of Lemma \ref{l:thin1plus}]
For notational convenience, let us set $$Z_i:=\sup_{u_{i}\in L_{i,j_i}, v_{i}\in L_{i+1,j_{i+1}}} T_{u_i,v_i}-S(v_i-u_i)$$
and let $Z_i'=(n/s)^{-1/3} Z_i$. Observe that, by our choice of $s$, Theorem \ref{t:supinf}, (ii) applies to $Z_{i}$ for all $i$ and hence it follows that $\{Z'_{i}\}$ is a sequence of independent subexponential random variables. {Applying the Bernstein inequality for sums of independent subexponential random variables (see e.g.\ \cite[Corollary 2.8.3]{Ver18})} to $\sum Z'_{i}$ it follows that for $\kappa>0$ and $s$ sufficiently large we have
$$\P\left(\sum_i Z'_i-\E Z'_{i} \geq \kappa s\right)  \leq e^{-cs}$$
for some $c>0$. By Lemma \ref{l:mean} it follows that 
$$\sum_{i} \E Z'_{i}\leq -C's$$ for $C'>0$ as in Lemma \ref{l:mean}. By setting $\kappa=\frac{C'}{2}$ we get that 
$$\P\left(\sum_{i}Z_{i}\geq -\frac{C'}{2}s^{2/3}n^{1/3}\right) \leq e^{-cs}$$
for some $c>0$. The proof of the lemma is complete, noticing
$$\ell(\gamma_{J})-S(v_{J}-u_{J})\leq \sum_{i} Z_{i} +\mathcal{L}(J),$$
using Lemma \ref{l:concave} and and finally setting $c_1=C'/10$. 
\end{proof}



It remains to prove Lemma \ref{l:mean}. We prove the following general result which immediately implies Lemma \ref{l:mean}. 

\begin{lemma}
\label{l:meangeneral}
Let $A_*$ (resp. $B_{*}$) denote the line segment parallel to the line $x+y=0$ of length $c_0n^{2/3}$ with centre $(-mn^{2/3},mn^{2/3})$ (resp.\ $\mathbf{n}$). For $\psi<1$ , there exists $c_0$ sufficiently small (depending on $\psi$) such that for $|m|<\psi n^{1/3}$ and for all $n$ sufficiently large we have
$$\E \sup_{u\in A_*, v\in B_*} T_{u,v}-S(v-u) \leq- C'n^{1/3}$$
for some $C'>0$.
\end{lemma}

\begin{figure}[htbp!]
\centering
\includegraphics[width=0.25\textwidth]{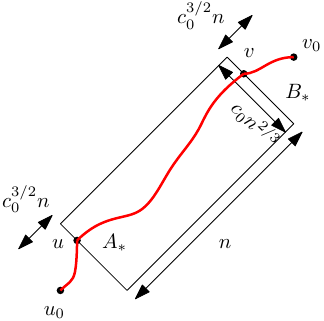}
\caption{An illustration of the proof of Lemma \ref{l:meangeneral}. We define points $u_0$ and $v_0$ at distance $c_0^{3/2}n$ away from $A_{*}$ and $B_{*}$ respectively in the diagonal direction. The negativity of the mean of Tracy-Widom distribution implies $\E T_{u_0,v_0}-S(v_0-u_0)\leq -cn^{1/3}$ for some $c>0$. Using $T_{u_0,v_0}\geq T_{u_0,u}+T_{u,v}+T_{v,v_0}$ for any $u\in A_{*}, v\in B_{*}$, and choosing $c_0$ small this leads to $\E \sup_{u\in A_*, v\in B_*} T_{u,v}-S(v-u) \leq- C'n^{1/3}$ for some $C'>0$ completing the proof of the lemma.} 
\label{f:thinmean}
\end{figure}

\begin{proof}
Consider the straightline joining the points $(-mn^{2/3},mn^{2/3})$ and $\mathbf{n}$ and let $u_0$ (resp.\ $v_0$) denote the point where it intersects the line $x+y=-2c_0^{3/2}n$ (resp.\ $x+y=2n+2c_0^{3/2}n$); see Figure \ref{f:thinmean} for an illustration. Clearly, we have for any $u\in A_*,v\in B_*$
$$T_{u,v}-S(v-u)\leq T_{u_0,v_0}-S(v_0-u_0) - (T_{u_0,u}-S(u-u_0))-(T_{v,v_0}-S(v_0-v))+\widetilde{S}(u_0,u,v,v_0)$$
where 
$$\widetilde{S}(u_0,u,v,v_0):=S(v_0-u_0)-S(u-u_0)-S(v-u)-S(v_0-v).$$
For notational convenience, let us define:
$$\mathbb{A}:=\E \inf_{u\in A_*} (T_{u_0,u}-S(u-u_0));$$
$$\mathbb{B}:=\E \inf_{v\in B_*} (T_{v,v_0}-S(v_0-v)).$$
Clearly
$$\E \sup_{u\in A_*, v\in B_*} T_{u,v}-S(v-u)\leq \E T_{u_0,v_0}-S(v_0-u_0)-\mathbb{A}-\mathbb{B}+\sup_{u,v} \widetilde{S}(u_0,u,v,v_0).$$
Using Theorem \ref{t:tw} (and the fact that the weak convergence there is uniform in $h\in K$ for every compact $K\subset (0,\infty)$), Theorem \ref{t:onepoint} and the well-known fact that GUE Tracy-Widom distribution has negative mean (see, e.g.\ \cite[Lemma A.4]{BGHH20} for a proof), it follows that 
\begin{equation}
\label{e:meanneg}
\E T_{u_0,v_0}-S(v_0-u_0)\leq -cn^{1/3}
\end{equation}
for some constant $c>0$ (depending only on $\psi$ and in particular not depending on $c_0$) for $n$ sufficiently large. We would now be done (by choosing $c_0$ small) if we could show that each of the other three terms $-\mathbb{A}, -\mathbb{B}$ and $\sup_{u,v}\widetilde{S}(u_0,u,v,v_0)$ can be upper bounded by $Cc_0^{1/2}n^{1/3}$ for some absolute constant $C$. This is what we shall do.

For $\mathbb{A}$ and $\mathbb{B}$, notice that Theorem \ref{t:onepoint} (more precisely \eqref{e:mean}) implies that $\inf_{u\in A_*} \E T_{u_0,u}-S(u-u_0), \inf_{v\in B_*} \E T_{v,v_0}-S(v_0-v) \geq  -C(c_0^{3/2}n)^{1/3}$ for some constant $C>0$. Together with Theorem \ref{t:supinf}, it implies that $\mathbb{A}, \mathbb{B}\geq -Cc_0^{1/2}n^{1/3}$ for some (possibly different) $C>0$. Finally, it is simple algebra, using the definition of $S$, to verify that $\sup_{u,v}\widetilde{S}(u_0,u,v,v_0)\leq Cc_0^{1/2}n^{1/3}$ for some $C>0$. Putting these together and choosing $c_0$ sufficiently small completes the proof of the lemma.
\end{proof}

\subsection{Most geodesics coalesce quickly}
\label{s:quick}
As mentioned earlier, for our purposes we need a stronger variant of Theorem \ref{t:rare} and Proposition \ref{c:rare} showing that in addition to disjoint geodesics being unlikely, most pairs of geodesics between points on $A_n$ and $B_n$ (in the notation of Theorem \ref{t:rare}) actually merge together rather quickly. To state the result formally, we introduce the following terminology. Let $A^*_{n}$ (resp.\ $B^*_{n}$) denote the line segment along the line $x+y=0$, denoted $\mathbb{L}_0$ (resp.\ $\mathbb{L}_{n}$) of length $2Hn^{1/3}$ with midpoint $(-mn^{2/3},mn^{2/3})$ (resp.\ $\mathbf{n}$). For $u,u'\in A^*_{n}$ and $v,v'\in B^*_{n}$ we say that $(u,v)\sim (u',v')$ if the geodesics $\Gamma_{u,v}$ and $\Gamma_{u',v'}$ coincide between the lines $\mathbb{L}_{n/3}$ and $\mathbb{L}_{2n/3}$. It is easy to see that $\sim$ is an equivalence relation. Let $M_{n}$ denote the number of equivalence classes. We have the following theorem.


\begin{theorem}
\label{t:multi2}
For $\psi<1$ and $H>0$, there exists $c>0$ such that for all $m$ with $|m|<\psi n^{1/3}$, all $\ell< n^{0.01}$ sufficiently large and all $n\in \N$ sufficiently large we have
$$\P(M_n\geq \ell)\leq e^{-c\ell^{1/128}}.$$
\end{theorem}

Observe that the only significant difference between Proposition \ref{c:rare} and Theorem \ref{t:multi2} is that in the former case we were considering disjoint paths from $A'_n$ to $B'_n$ which were naturally ordered. In the set-up of Theorem \ref{t:multi2}, we have to consider also geodesics that can potentially cross each other. To circumvent this issue we show that if there exists a large number of different equivalence classes there must be some stretch of linear length between $\mathbb{L}_0$ and $\mathbb{L}_{n}$ such that the geodesics corresponding to the equivalence classes are disjoint in this stretch.  For the purpose of this proof we shall always assume that we are working on the probability one subset on which there is a unique geodesic between every pair of vertices in $\Z^2$ without explicitly mentioning the same.

We first need a combinatorial lemma. For $a<b<c<d\in \Z$, let us consider the parallel lines $\mathbb{L}_{a}$, $\mathbb{L}_b$, $\mathbb{L}_c$ and $\mathbb{L}_d$. Let $u_1<u_2<\cdots <u_{k}$ (resp.\ $v_1<v_2<\cdots <v_{k}$) be points on $\mathbb{L}_a$ (resp.\ $\mathbb{L}_d$) such that for $i\neq i'$, $\Gamma_{u_i,v_i}$ and $\Gamma_{u_{i'},v_{i'}}$ do not coincide between $\mathbb{L}_b$ and $\mathbb{L}_c$. We have the following result. 

\begin{lemma}
\label{l:ordering1}
In the above set-up, there exists a subset $I$ of $\{1,2,\ldots, k\}$ with $|I|\geq (k-1)/3$ such that the restrictions of $\{\Gamma_{u_i,v_i}\}_{i\in I}$ are disjoint between at least one of the following three pairs of lines:  (i) $\mathbb{L}_{a}$ and $\mathbb{L}_{b}$,  (ii) $\mathbb{L}_{b}$ and $\mathbb{L}_{c}$, (iii) $\mathbb{L}_{c}$ and $\mathbb{L}_{d}$.
\end{lemma}

\begin{proof}
Notice that the planar ordering of the geodesics together with their assumed uniqueness implies that the geodesics $\Gamma_{u_i,v_i}$ cannot cross one another. For $i=1,2,\ldots k-1$, let us define $H_{i}=(h_i^1,h_i^2,h_i^{3})\in \{0,1\}^3$ as follows: $h_{i}^{1}=1$ if $\Gamma_{u_{i},v_i}$ and $\Gamma_{u_{i+1},v_{i+1}}$ has a common vertex between $\mathbb{L}_{a}$ and $\mathbb{L}_b$ and 0 otherwise; $h_{i}^{2}=1$ if $\Gamma_{u_{i},v_i}$ and $\Gamma_{u_{i+1},v_{i+1}}$ has a common vertex between $\mathbb{L}_{b}$ and $\mathbb{L}_c$ and 0 otherwise; $h_{i}^{3}=1$ if $\Gamma_{u_{i},v_i}$ and $\Gamma_{u_{i+1},v_{i+1}}$ has a common vertex between $\mathbb{L}_{c}$ and $\mathbb{L}_d$ and 0 otherwise. Observe that if for some $i$, $H_{i}=(1,1,1)$ this implies $\Gamma_{u_i,v_i}$ and $\Gamma_{u_{i+1},v_{i+1}}$ coincide between $\mathbb{L}_b$ and $\mathbb{L}_{c}$ (using the ordering described above) which contradicts the hypothesis. Therefore, there exists at least one $0$ in each $H_{i}$ and consequently there exists $I\subset \{1,2,\ldots, k\}$ with $|I|\geq (k-1)/3$ and $j\in \{1,2,3\}$ such that $h_{i}^{j}=0$ for all $i\in I$. We claim that $I$ satisfies the conclusion of the lemma, that is $\{\Gamma_{u_i,v_i}\}_{i\in I}$ are disjoint between at least one of the following three pairs of lines:  (i) $\mathbb{L}_{a}$ and $\mathbb{L}_{b}$,  (ii) $\mathbb{L}_{b}$ and $\mathbb{L}_{c}$, (iii) $\mathbb{L}_{c}$ and $\mathbb{L}_{d}$. Indeed, we shall show that if $j=1$, $\{\Gamma_{u_i,v_i}\}_{i\in I}$ are disjoint between $\mathbb{L}_{a}$ and $\mathbb{L}_{b}$. The cases $j=2$ and $j=3$ can be handles in an identical manner. For $j=1$, observe that if $i,i'\in I$ is such that $i'=i+1$, $\Gamma_{u_i,v_{i}}$ and   $\Gamma_{u_{i'},v_{i'}}$ are disjoint between $\mathbb{L}_a$ and $\mathbb{L}_b$ by definition. If $i'>i+1$, then again, by planar ordering neither $\Gamma_{u_i,v_{i}}$ nor  $\Gamma_{u_{i'},v_{i'}}$ can cross $\Gamma_{u_{i+1},v_{i+1}}$ and hence their restrictions between $\mathbb{L}_a$ and $\mathbb{L}_b$ are disjoint by definition of $I$. This completes the proof. 
%
%
%
\end{proof}

To handle paths that are not ordered in the set-up of Theorem \ref{t:multi2}, we invoke the Erd\H{o}s-Szekeres Theorem and use Lemma \ref{l:ordering1} to obtain the following result.

\begin{lemma}
\label{l:ordering}
In the set-up of Theorem \ref{t:multi2}, suppose $u_1<u_2<\cdots < u_{k}$ (resp.\ $v_1,v_2,\ldots , v_{k}$ with $v_i\neq v_j$) be points on $\mathbb{L}_0$ (resp.\ $\mathbb{L}_n$) such that each of the pairs $(u_i,v_{i})$ are from a different equivalence class, i.e., for each pair $(i,j)$ with $i\neq j$, $\Gamma_{u_{i}, v_{i}}$ and $\Gamma_{u_{j},v_{j}}$ do not coincide between the lines $\mathbb{L}_{n/3}$ and $\mathbb{L}_{2n/3}$. For $k$ sufficiently large, there exists a subset $I\subset \{1,2,\ldots, k\}$ with $|I|\geq \frac{k^{1/8}}{100}$ that the restrictions of $\{\Gamma_{u_i,v_i}\}_{i\in I}$ are disjoint between at least one of the following four pairs of lines:  (i) $\mathbb{L}_{0}$ and $\mathbb{L}_{n/6}$,  (ii) $\mathbb{L}_{n/6}$ and $\mathbb{L}_{n/3}$, (iii) $\mathbb{L}_{n/3}$ and $\mathbb{L}_{2n/3}$,  (iv) $\mathbb{L}_{2n/3}$ and $\mathbb{L}_{n}$.
\end{lemma}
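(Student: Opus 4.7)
The plan is to combine an Erd\H{o}s--Szekeres extraction with a four-strip uniqueness identity and a cluster/sandwich argument; the work is to reduce to a polymer-ordered sub-collection and then exploit the almost-sure uniqueness of LPP geodesics to force most consecutive pairs in it to be strip-disjoint.

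First, apply Erd\H{o}s--Szekeres to the permutation of the distinct $v_i$'s to extract indices $i_1<\cdots<i_m$ with $m\ge k^{1/2}$ along which $v_{i_1},\ldots,v_{i_m}$ is monotone. In the main (increasing) case both endpoints of the sub-collection are ordered, so the geodesics $\Gamma_{i_1},\ldots,\Gamma_{i_m}$ are polymer-ordered with the standard sandwich property (i.e., at every line $\mathbb{L}_r$ the vertex of $\Gamma_{i_p}$ lies between those of $\Gamma_{i_j}$ and $\Gamma_{i_l}$ whenever $j\le p\le l$). The decreasing case is handled at the end via a second Erd\H{o}s--Szekeres.

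The key combinatorial observation in the increasing case is a four-strip uniqueness identity. If a consecutive pair $(\Gamma_{i_j},\Gamma_{i_{j+1}})$ shared a vertex $p_s$ in each of the four strips $S_1=[\mathbb{L}_0,\mathbb{L}_{n/6}]$, $S_2=[\mathbb{L}_{n/6},\mathbb{L}_{n/3}]$, $S_3=[\mathbb{L}_{n/3},\mathbb{L}_{2n/3}]$, $S_4=[\mathbb{L}_{2n/3},\mathbb{L}_n]$, then $p_1\preceq p_2\preceq p_3\preceq p_4$ along the paths, so a.s.\ uniqueness of the LPP geodesic from $p_1$ to $p_4$ forces the two paths to coincide on the entire sub-path from $p_1$ to $p_4$. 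This shared sub-path starts at or below $\mathbb{L}_{n/6}$ and ends at or above $\mathbb{L}_{2n/3}$, so it entirely covers the middle strip $S_3$; the middle portions of the two geodesics therefore agree, contradicting the hypothesis that the pairs $(u_{i_j},v_{i_j})$ and $(u_{i_{j+1}},v_{i_{j+1}})$ are in different equivalence classes. Hence every consecutive pair is disjoint in at least one of the four strips, and pigeonhole over the $m-1$ consecutive pairs yields a strip $s^\ast$ selected by at least $(m-1)/4$ of them.

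Finally, partition $\{i_1,\ldots,i_m\}$ into maximal runs of consecutive indices that share a vertex in $s^\ast$, obtaining at least $m/4$ clusters. The polymer sandwich now shows that any two geodesics in different clusters are genuinely disjoint in $s^\ast$: a common vertex $w\in s^\ast$ between $\Gamma_{i_j}$ and $\Gamma_{i_l}$ ($j<l$) would force every intermediate polymer-ordered geodesic to visit $w$ at the same line, contradicting that the cluster-separating consecutive pair is disjoint in $s^\ast$. One representative per cluster then yields at least $m/4\ge k^{1/2}/4\ge k^{1/8}$ pairwise-disjoint-in-$s^\ast$ geodesics, as required. The main obstacle is the decreasing sub-case, where the $m$ geodesics pairwise cross and polymer ordering is not directly available; the fix is to apply Erd\H{o}s--Szekeres a second time to their positions on $\mathbb{L}_{n/2}$ to extract a further sub-collection of size $k^{1/4}$ that is polymer-ordered over one half of the rectangle, and to run the same uniqueness $+$ cluster argument inside that half, producing $k^{1/4}/C\ge k^{1/8}$ pairwise disjoint geodesics in one of the four strips. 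The loose exponent $k^{1/8}$ in the statement is precisely what absorbs the losses from this iterated Erd\H{o}s--Szekeres in the decreasing sub-case.
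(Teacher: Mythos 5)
Your treatment of the increasing sub-case is sound and in fact spelled out more carefully than the paper, which dismisses it as ``easy to see'': the observation that sharing a vertex in $S_1=[\mathbb{L}_0,\mathbb{L}_{n/6}]$ and a vertex in $S_4=[\mathbb{L}_{2n/3},\mathbb{L}_n]$ forces coincidence over $[\mathbb{L}_{n/3},\mathbb{L}_{2n/3}]$ is exactly the right use of geodesic uniqueness, and the pigeonhole-plus-cluster step (with the polymer sandwich giving genuine pairwise disjointness across cluster boundaries) is correct.

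The decreasing sub-case, however, has a real gap, and it is caused by your choice of the line $\mathbb{L}_{n/2}$. That line sits \emph{inside} the equivalence-defining strip $S_3=[\mathbb{L}_{n/3},\mathbb{L}_{2n/3}]$, and this breaks the plan in two ways. First, you apply Erd\H{o}s--Szekeres to the positions on $\mathbb{L}_{n/2}$ without addressing that these positions may coincide; if many geodesics pass through a common vertex $z\in\mathbb{L}_{n/2}$, the geodesics through $z$ need \emph{not} be pairwise disjoint on any single one of the four strips. Indeed two such geodesics may differ only on $[\mathbb{L}_{n/3},\mathbb{L}_{n/2}]$ (and so be disjoint on $S_1\cup S_2$ but not on $S_4$), while another pair differs only on $[\mathbb{L}_{n/2},\mathbb{L}_{2n/3}]$ (disjoint on $S_4$ but not on $S_1\cup S_2$); extracting a single large clique disjoint in a common strip from this mixture costs an extra Erd\H{o}s--Szekeres/Ramsey-style loss and the resulting exponent drops below $1/8$. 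Second, even if the positions are distinct and you are polymer-ordered on $[\mathbb{L}_0,\mathbb{L}_{n/2}]$, ``running the same uniqueness + cluster argument inside that half'' does not produce a contradiction: sharing a vertex in $S_1$ and a vertex in $[\mathbb{L}_{n/3},\mathbb{L}_{n/2}]$ only forces agreement on $[\mathbb{L}_{n/6},\mathbb{L}_{n/3}]$, which is $S_2$, not $S_3$, so the different-equivalence-class hypothesis is never invoked and no consecutive pair is forced to be strip-disjoint.

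The paper's proof avoids both problems by looking at positions on $\mathbb{L}_{n/6}$ rather than $\mathbb{L}_{n/2}$, and this choice is essential, not cosmetic. Since $\mathbb{L}_{n/6}$ lies \emph{below} the middle strip, geodesics that share a point $z$ on $\mathbb{L}_{n/6}$ and belong to different equivalence classes cannot share any vertex at or above $\mathbb{L}_{2n/3}$ (uniqueness of the geodesic from $z$ to that vertex would force agreement over all of $[\mathbb{L}_{n/3},\mathbb{L}_{2n/3}]$), so they are automatically pairwise disjoint on $S_4$ --- the coinciding-position case is handled with \emph{no} exponent loss. When the positions on $\mathbb{L}_{n/6}$ are strictly increasing, the crossing points lie above $\mathbb{L}_{n/6}$ and the paths are fully disjoint on $S_1$, matching the clean part of your argument; when they are strictly decreasing, the geodesics become polymer-ordered (both endpoints decreasing) on $[\mathbb{L}_{n/6},\mathbb{L}_n]$, which contains all three strips $S_2,S_3,S_4$ and in particular the entire middle strip, so the four-strip uniqueness + cluster argument now \emph{does} apply. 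Replacing $\mathbb{L}_{n/2}$ by $\mathbb{L}_{n/6}$ and treating the coinciding-position case as the paper does would close the gap.
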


\begin{proof}
By the Erd\H{o}s-Szekeres theorem ({which states that among $n^{2}+1$ distinct real numbers there always exists a monotone subsequence of length $n+1$, see e.g.\ \cite{Ste95}}), there exists a subset $I_1=\{i_1, i_2, \ldots i_{\sqrt{k}}\}$ of $\{1,2,\ldots, k\}$ such that either $v_{i_1}<v_{i_2}<\cdots <v_{i_{\sqrt{k}}}$ or $v_{i_1}>v_{i_2}>\cdots >v_{i_{\sqrt{k}}}$ (assume momentarily that $k^{1/8}$ is a positive integer). In the former case, applying Lemma \ref{l:ordering1}, we get that there exists $I\subset I_1$ with $|I|\geq (\sqrt{k}-1)/3$ satisfying the conclusion of lemma. completing the proof in that case.


Suppose now the contrary, i.e., $v_{i_1}>v_{i_2}>\cdots >v_{i_{\sqrt{k}}}$. Consider then the points $w_{i_1},w_{i_2},\ldots ,w_{i_{\sqrt{k}}}$ such that  
$\Gamma_{u_{i_{j}}, v_{i_{j}}}$ intersects $\mathbb{L}_{n/6}$ at $w_{i_j}$. 
Notice that these points need not be distinct. However, observe that there exists a further subset $I'$ of $I_1$ with $|I'|\geq k^{1/4}$ such that either: (i) $w_{i}=w_{i'}$ for all $i,i'\in I'$; or, (ii), $w_{i}$ are all distinct for $i\in I'$. For (i), notice that by  our hypothesis, for $i\neq i'\in I_2$, $\Gamma_{w_i,v_i}$ and $\Gamma_{w_{i'},v_{i'}}$ do not coincide between $\mathbb{L}_{n/3}$ and $\mathbb{L}_{2n/3}$, hence (again using the planar ordering and uniqueness of the geodesics) they must be disjoint between $\mathbb{L}_{2n/3}$ and $\mathbb{L}_{n}$ and hence we are done. For (ii), use the Erd\H{o}s-Szekeres theorem to obtain a further subset $I''=\{i'_1<i'_2<\cdots <i'_{k^{1/8}}\}$ of $I'$ such that either 
(ii)(a) $w_{i'_1}< w_{i'_2}< \cdots < w_{i'_{k^{1/4}}}$ or (ii)(b)  $w_{i'_1}> w_{i'_2}> \cdots > w_{i'_{k^{1/4}}}$. Notice that, in scenario (ii)(a), by planar ordering, each pairs geodesics $\Gamma_{w_{i},v_{i}}$  and $\Gamma_{w_{i'},v_{i'}}$ must intersect for $i,i'\in I''$ and hence by uniqueness of geodesics, $\Gamma_{u_{i},w_{i}}$ are pairwise disjoint, completing the proof. For (ii)(b) notice that the geodesics $\Gamma_{w_i,v_{i}}$ are ordered for $i\in I''$ and applying Lemma \ref{l:ordering1}, we get a further subset $I$ of size $(k^{1/8}-1)/3$ having the desired property. Note that, in the general case where $k$ is not assumed to be a perfect eighth power, the same argument gives an $I$ with $ |I|\geq (\lfloor\lfloor k^{1/2}\rfloor^{1/4} \rfloor -1)/3\geq \frac{k^{1/8}}{100}$ for $k$ sufficiently large. This completes the proof of the lemma. 
%
%
%
%
%
%
\end{proof}

We can now prove Theorem \ref{t:multi2}.

\begin{proof}[Proof of Theorem \ref{t:multi2}]
Fix $n$ sufficiently large and $\ell< n^{0.01}$ sufficiently large. Since the probability upper bound we are seeking is $\gg e^{-\ell^{1/4}}$ for $\ell$ large,  using Lemma \ref{l:rec} we can restrict ourselves on the event that none of the geodesics from $A^*_n$ to $B^*_n$ exit $R_n$. On the event $\{M_n\geq \ell\}$, let $\{\Gamma_{u_{i},v_{i}}\}_{i=1}^{\ell}$ denote $\ell$ geodesics such that each $(u_i,v_i)$ is from a different equivalence class. It is easy to observe, that one of the following must hold: (a) there exists a subset $I$ of $\{1,2,\ldots, \ell\}$ with $|I|\geq \ell^{1/4}$ such that as $i\in I$ is varied, $u_i$s are all distinct and $v_{i}$s are all the same; (b) there exists a subset $I$ of $\{1,2,\ldots, \ell\}$ with $|I|\geq \ell^{1/4}$ such that as $i\in I$ is varied, $u_i$s are all the same and $v_{i}$s are all distinct; (c) there exists a subset $I$ of $\{1,2,\ldots, \ell\}$ with $|I|\geq \ell^{1/4}$ such that as $i\in I$ is varied, $u_i$s are all distinct and $v_{i}$s are all distinct. In scenario (a), the planar ordering (plus the hypothesis that $(u_i,v_i)$ are all in different equivalence classes) forces that the restrictions of $\Gamma_{u_{i},v_{i}}$ and $\Gamma_{u_{i'},v_{i'}}$ between $\mathbb{L}_0$ and $\mathbb{L}_{n/3}$ are disjoint for any distinct $i,i'\in I$. Since these restrictions are also contained within $R_{n}$, Proposition \ref{c:rare} applies and upper bound the probability of this scenario by $e^{-c\ell^{1/16}}$. In scenario (b), we conclude by an identical argument that the restrictions of $\Gamma_{u_{i},v_{i}}$ and $\Gamma_{u_{i'},v_{i'}}$ between $\mathbb{L}_{2n/3}$ and $\mathbb{L}_{n}$ are disjoint for any distinct $i,i'\in I$, and upper bound the probability of this scenario by $e^{-c\ell^{1/16}}$ using Proposition \ref{c:rare} again. In scenario (c), we apply Lemma \ref{l:ordering} to find a subset $I'$ of $I$ of size at least $\frac{\ell^{1/32}}{100}$ such that the restrictions of $\{\Gamma_{u_{i},v_{i}}\}_{i\in I'}$ are pairwise disjoint either between $\mathbb{L}_0$ and $\mathbb{L}_{n/6}$ or between $\mathbb{L}_{n/6}$ and $\mathbb{L}_{n/3}$ or between $\mathbb{L}_{n/3}$ and $\mathbb{L}_{2n/3}$ or between $\mathbb{L}_{2n/3}$ and $\mathbb{L}_{n}$. As $\ell$ is sufficiently large (compared to $H$), Proposition \ref{c:rare} applies in each of these cases and we conclude that the probability of scenario (c) is upper bounded by $e^{-c\ell^{1/128}}$ for some $c>0$. The proof of the theorem is completed by taking a union bound over scenarios (a), (b) and (c).
%
\end{proof}

%

We finish this section with a remark on the midpoint problem which was alluded to before. Consider the geodesic $\Gamma_{n}$ from $\mathbf{0}$ to $\mathbf{n}$. Assuming $n\in 2\N$ what is the probability that $\Gamma_{n}$ passes through the midpoint $\mathbf{\frac{n}{2}}$? This question was asked in \cite{BKS04} in the context of first passage percolation and became popular as the ``midpoint problem". It is natural to conjecture that the probability is $\Theta(n^{-2/3})$ for models in KPZ universality class where the transversal fluctuation exponent is believed to be $2/3$. However, for non-integrable models, to show even that this probability is $o(1)$ remained open for many years and was only recently settled in \cite{AH16}. The optimal upper bound described above can easily be deduced from Theorem \ref{t:multi2} as the following remark sketches.

\begin{remark}
\label{r:midpoint} 
Theorem \ref{t:multi2} implies that the number of vertices on the line $x+y=n$ contained in a geodesic from $(t,-t)$ to $(n+t,n-t)$ for some $t$ with $|t|\leq n^{2/3}$ has expectation uniformly bounded above by a constant. This, together with the translation invariance of the model gives that 
$n^{2/3}\times \P(\mathbf{\frac{n}{2}}\in \Gamma_{n})=O(1)$ and hence $\P(\mathbf{\frac{n}{2}}\in \Gamma_{n})=O(n^{-2/3})$. For the lower bound, it suffices to conclude that with probability bounded away from 0 the following event occurs: the geodesic from $(n^{2/3},-n^{2/3})$ to $(n+n^{2/3},n-n^{2/3})$ and the geodesic from $(-n^{2/3},n^{2/3})$ to $(n-n^{2/3},n+n^{2/3})$ intersect the line $x+y=n$ at the same point $(n/2+t,n/2-t)$ for some $t$ with $|t|\leq n^{2/3}$. See \cite{BB20} for the details and further extensions along this approach.
%
\end{remark}

\section{A geodesic not directed axially hitting the origin is unlikely}
\label{s:h}

We shall prove Proposition \ref{p:o1} in this section. Fix $h\in (0,1)$ sufficiently small. Recall that Proposition \ref{p:o1} seeks to bound the probability of the event $\ce_{n,h}$ that there exists a geodesic from $u$ located on the left or bottom side of the square $[-n,n]^2$ ($\mathsf{Ent}_{n}$) to $w$ located on the top or right of the square $[-n,n]^2$ ($\mathsf{Exit}_{n}$) with $\mbox{slope}(u,w)\in (\frac{h}{2}, \frac{2}{h})$ passing through $\mathbf{0}$. Instead first we show that for any fixed $n$ sufficiently large there exists a (deterministic) point $v\in [-\frac{nh}{100}, \frac{nh}{100}]^2$ such that the existence a geodesic as above passing through $v$ instead of $\mathbf{0}$ has probability $O(n^{-1/3})$. 

We discretize $\mathsf{Ent}_{n}$ and $\mathsf{Exit}_{n}$ into sub-intervals of length $n^{2/3}$ each. Notice first that for $h$ sufficiently small any path from $\mathsf{Ent}_{n}$ to $\mathsf{Exit}_{n}$ that intersects $[-\frac{nh}{100}, \frac{nh}{100}]^2$ must start rather close to the third quadrant of $\Z^2$ and end rather close to the first quadrant of $\Z^2$. More specifically for
$$i=-hn^{1/3}, -hn^{1/3}+1, \ldots, 0, 1,2,\ldots , n^{1/3},$$ let $\cB_{i}$ denote the line segment $[-in^{2/3},(-i+1)n^{2/3}]\times \{-n\}$ and let $\cL_{i}$ denote the line segment $ \{-n\}\times [-in^{2/3},(-i+1)n^{2/3}]$. Similarly,  let $\cT_{i}$ denote the line segment $[(i-1)n^{2/3},in^{2/3}]\times \{n\}$ and let $\cR_{i}$ denote the line segment $ \{n\}\times [(i-1)n^{2/3},in^{2/3}]$. For $I\in \cup_{i} \cB_{i}\bigcup \cup_{i} \cL_{i}$ and $J\in \cup_{i} \cT_{i} \bigcup \cup_{i} \cR_{i}$, we say that the pair $(I,J)$ is $h$-compatible if the straight line joining the mid-point of $I$ to the midpoint of $J$ has slope in $(\frac{h}{10}, \frac{10}{h})$; see Figure \ref{f:highway}. It is easy to see that, for $n$ sufficiently large, if there exists $u\in \mathsf{Ent}_{n}$, $w\in \mathsf{Exit}_{n}$ such that $\mbox{slope}(u,w)\in (\frac{h}{4}, \frac{4}{h})$ and there exists a directed path from $u$ to $w$ that intersects $[-\frac{nh}{100}, \frac{nh}{100}]^2$ then there exists $I\in \cup \cB_{i} \cup \cL_{i}$, $J\in \cup \cT_{i} \cup \cR_{i}$ with $u\in I,w\in J$ and $(I,J)$ is $h$-compatible. The following lemma is the key to the proof of Proposition \ref{p:o1}.


\begin{figure}[htbp!]
\centering
\includegraphics[width=0.4\textwidth]{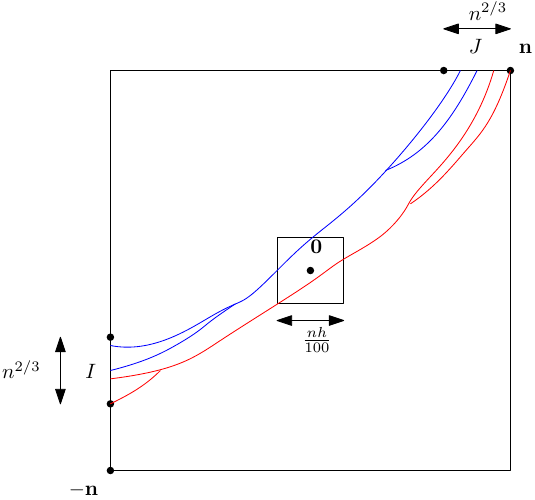}
\caption{The bottom and left side ($\mathsf{Ent}_{n}$) and the top and right side ($\mathsf{Exit}_{n}$) of the square $[-n,n]^2$ is divided into line segments of length $n^{2/3}$. Fix a pair of such line segments $(I,J)$; $I$ from the bottom and left side, and $J$ from the top and right side. Lemma \ref{l:sidetoside} shows (following arguments as in the proof of Theorem \ref{t:multi2}) that geodesics from $I$ to $J$ coalesce into $O(1)$ many highways before passing near the origin and hence the expected number of vertices on such geodesics in a small (but linear size) box around $\mathbf{0}$ is $O(n)$. Taking a union bound over $O(n^{2/3})$ many pairs of intervals $(I,J)$, we concludes that the number of vertices in a small linear size box around the origin that also lie on geodesics across the square is $O(n^{5/3})$, and the proof of Proposition \ref{p:o1} is concluded using an averaging argument.}
\label{f:highway}
\end{figure}



\begin{lemma}
\label{l:sidetoside}
Let $h\in (0,1)$ be fixed and sufficiently small. There exists a constant $C=C(h)>0$ such that for each $h$-compatible pair of line segments $(I,J)$ we have the following. Let $N=N_{n}(I,J)$ denote the number of vertices $v$ in $[-\frac{nh}{100}, \frac{nh}{100}]^2$ such that there exists $u\in I$ and $w\in J$ with $v\in \Gamma_{u,w}$. Then $\E N_{n}(I,J) \leq Cn$.
\end{lemma}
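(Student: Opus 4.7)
The plan is to adapt Corollary \ref{c:mean} to the tilted setting, following the proofs of Theorems \ref{t:rare} and \ref{t:multi2} applied to the parallelogram $P = P(I,J)$ whose pair of opposite short sides are $I$ and $J$. A case check of the definitions of the $\cB_i, \cL_i, \cT_i, \cR_i$ together with the $h$-compatibility constraint shows that, for $h \in (0,1)$ sufficiently small and any $h$-compatible pair $(I,J)$, the parallelogram $P$ has length $L \asymp_h n$ and width $n^{2/3}$, and its long axis has slope in $[h/10, 10/h]$. Consequently, all integrable inputs of Appendix \ref{s:appb} (Theorem \ref{t:moddev}, Propositions \ref{t:treeinf}, \ref{t:treesup}, \ref{t:treetilted} and \ref{p:tfori}) apply uniformly to $P$ and to its sub-parallelograms, with constants depending only on $h$.

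First I would establish a tilted analogue of Theorem \ref{t:rare}:
\[
\P(\exists\; \ell \text{ ordered pairwise-disjoint geodesics from } I \text{ to } J) \le e^{-c(h)\ell^{1/4}}.
\]
The proof should go through as a verbatim translation of Lemmas \ref{l:rec}--\ref{l:thin1}, Lemma \ref{l:count} and Proposition \ref{p:multi} into the tilted setting. Specifically, Proposition \ref{p:tfori} combined with polymer ordering gives the confinement analogue of Lemma \ref{l:rec}; the tilted forms of Propositions \ref{t:treeinf} and \ref{t:treetilted} give the minimum-passage-time lower bound (Lemma \ref{l:inf}) and the mean/variance estimate (Lemma \ref{l:mean}); the grid construction with $h' = \ell^{1/2}$ and $m' = (h')^{2/3}/c_0(h)$ combined with Bernstein's inequality for sums of independent sub-parallelogram passage times yields the thin-region bound (Lemma \ref{l:thin1}); and BK together with the counting estimate of Lemma \ref{l:count} completes the step.

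Next I would define the equivalence relation $(u, v) \sim (u', v')$ on $I \times J$ by requiring $\Gamma_{u,v}$ and $\Gamma_{u',v'}$ to coincide throughout the box $B := [-nh/100, nh/100]^2$, and let $M = M_n(I, J)$ denote the number of equivalence classes. The goal is the tilted analogue of Theorem \ref{t:multi2}, namely $\P(M \ge \ell) \le e^{-c(h) \ell^{1/128}}$. To this end, I would choose four consecutive slabs $S_1, \ldots, S_4$ transverse to the long axis of $P$, each of length $L/4 = \Theta_h(n)$, such that the projection of $B \cap P$ onto the long axis lies entirely inside a single slab $S_{j}$; this is possible because the diameter of $B$ in the long direction is $O(nh) \ll L/4$ for $h$ small. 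Since representatives of distinct classes must disagree on $S_{j}$, the Erd\H{o}s--Szekeres argument of Lemma \ref{l:ordering}, with $S_{j}$ in the role of the middle-third slab, forces any $k$ distinct representatives to contain $k^{1/8}$ geodesics that are pairwise disjoint in one of $S_1, \ldots, S_4$; invoking the preceding step inside that slab then concludes the argument.

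Finally, every equivalence class contributes at most $nh/25 + 1 \leq C(h) n$ vertices to $B$, because all geodesics within a single class share a common up/right path inside $B$ whose total length is at most twice the side length of $B$. Hence $N_n(I, J) \le C(h) n \cdot M$, and integrating the tail estimate for $M$ gives $\E N_n(I, J) \le C(h) n \cdot \E M \le C'(h) n$, as required. The main obstacle throughout will not be conceptual but rather careful bookkeeping: one must verify that each statement in Appendix \ref{s:appb} extends uniformly to parallelograms whose long-axis slope lies in the compact interval $[h/10, 10/h]$, with all constants depending only on $h$. This should be extractable from the proofs in \cite{BSS14} with only routine modifications.
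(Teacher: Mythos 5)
Your overall plan matches the paper's proof (sketched in Appendix~\ref{s:appa}): build a tilted analogue of Theorem~\ref{t:rare} for the quadrilateral spanned by $I$ and $J$, upgrade it to an analogue of Theorem~\ref{t:multi2} via an Erd\H{o}s--Szekeres slab argument, and conclude by noting each equivalence class contributes $O_h(n)$ vertices to the box. However, two points in your proposal are glossed over in a way that leaves genuine work undone.

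The first and more substantive issue is your claim that the region spanned by an $h$-compatible pair $(I,J)$ is always a parallelogram. This holds only when $I$ and $J$ are parallel (both from $\{\cB_i\}\cup\{\cT_i\}$ or both from $\{\cL_i\}\cup\{\cR_i\}$). But $h$-compatibility allows mixed pairs with one horizontal and one vertical segment, e.g.\ $I\subset\cB_0$ and $J\subset\cR_1$; the resulting quadrilateral is not a parallelogram, has no canonical long axis, and the slab decomposition you describe does not directly apply. The paper treats this case separately (Theorem~\ref{t:multi3plus}): one introduces an intermediate short segment $D$ near $J$, parallel to $I$, shows using the local transversal fluctuation estimate of~\cite{BSS17++} that with high probability every geodesic from $I$ to $J$ crosses $D$, and then reduces to the parallelogram case via $I$ and $D$. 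Your proof needs a version of this reduction.

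Second, the phrase ``verbatim translation'' understates what the paper flags as ``the only significant difference'' from the untilted setting: on the anti-diagonal segments $A_n$, $B_n$ of Theorem~\ref{t:rare} the quantity $\E T_{u,v}$ varies only by $O(\ell^{1/8}n^{1/3})$, so one can center all passage times by the single constant $4n$; on the axis-aligned segments here $\E T_{u,v}$ varies at \emph{first order} in the endpoints, by $\Theta(\ell^{1/8}n^{2/3})$, forcing one to recenter each passage time by its own expectation. When translating Lemma~\ref{l:thin1}, this introduces a deterministic term of the form $\mathcal{L}(J)=\sup\sum_i \E T_{u_i,u_{i+1}}-\E T_{u_0,u_h}$ quantifying the cost of the broken line, and one must separately show $\mathcal{L}(J)\leq -Ch^{2/3}n^{1/3}$ using concavity of the limit shape together with the negative mean of the Tracy--Widom law. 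Your appeal to Bernstein's inequality handles the fluctuation part of that bound, but the mean-penalty estimate needs to be stated and verified; it is not a byproduct of the untilted argument. With both of these filled in, your plan coincides with the paper's.
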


We postpone the proof of Lemma \ref{l:sidetoside} momentarily and first complete the proof of Proposition \ref{p:o1} using it.

\begin{proof}[Proof of Proposition \ref{p:o1}]
Let $h$ be as in the statement of the proposition and fix $n\in \N$ sufficiently large.
Observe that by using Lemma \ref{l:sidetoside} and summing over all ($O(n^{2/3})$ many) $h$-compatible pairs $(I,J)$ it follows that
$$ \E \# \left\{v: [-\frac{nh}{100}, \frac{nh}{100}]^2 : \exists u\in \mathsf{Ent}_{n}, w\in \mathsf{Exit}_{n}, \mbox{slope}(u,w)\in (\frac{h}{4}, \frac{4}{h})~\text{and}~ v\in \Gamma_{u,w}\right\} \leq Cn^{5/3}$$
for some constant $C=C(h)>0$. This implies that there exists a deterministic $v\in [-\frac{nh}{100}, \frac{nh}{100}]^2 \cap \Z^2$ such that

$$\P\left( \exists u\in \mathsf{Ent}_{n}, w\in \mathsf{Exit}_{n}, \mbox{slope}(u,w)\in (\frac{h}{4}, \frac{4}{h})~\text{and}~ v\in \Gamma_{u,w}\right)\leq 10^4 h^{-2} Cn^{-1/3}.$$
For notational convenience, the event in the above display will be denoted $\widetilde{\ce}_{n,v,h}$.
Consider now $S'_{n}$: the smallest square with centre $v$ containing $S_n=[-n,n]^2$. Suppose now that the event $\ce^{*}_{n,v,h}$ holds: there exists a geodesic $\gamma$ from some $u'$ located at the left or bottom side of $S'_{n}$ to some $w'$ located at the top or right side of $S'_{n}$ such that $\mbox{slope}(u',w')\in (\frac{h}{2},\frac{2}{h})$ that passes through $v$. For $u',w'$ and $\gamma$ as above, let $u$ and $w$ denote the first and the last point where $\gamma$ intersects $S_{n}$. By the directed nature of $\gamma$ it is clear that $u\in \mathsf{Ent}_{n}$ and $w\in \mathsf{Exit}_{n}$ (notice that it is possible that $u=u'$ or $w=w'$). Let $L_{u',w'}$ denote the straight-line joining $u'$ and $w'$. It is easy to see the following: either (a) $\mbox{slope}(u,w)\in (\frac{h}{4},\frac{4}{h})$, or (b) there exist vertices on $\gamma$ whose distance to $L_{u',w'}$ exceeds $cn$ for some $c(h)>0$. Let $\mathsf{TF}^*_{n}$ denote the event that there exist points $u'$ located at the left or bottom side of $S'_{n}$ and $w'$ located at the top or right side of $S'_{n}$ such that $\mbox{slope}(u',w')\in (\frac{h}{2},\frac{2}{h})$ and such that the maximum distance of the geodesic $\Gamma_{u',w'}$ to the straightline $L_{u',w'}$ exceeds $cn$. By the above discussion, we have 
$$\ce^{*}_{n,v,h}\subseteq \widetilde{\ce}_{n,v,h} \cup \mathsf{TF}^*_{n}.$$ 
It follows from Theorem \ref{p:tf} and taking a union bound over all pairs $(u',w')$ that $\P(\mathsf{TF}^*_{n})\leq e^{-c'n}$ for some $c'(h)>0$ and for all $n$ sufficiently large. This together with the upper bound on $\P(\widetilde{\ce}_{n,v,h})$ derived above implies that $\P(\ce^{*}_{n,v,h})\leq Cn^{-1/3}$ for some $C=C(h)>0$. Notice that $S'_{n}$ is a translate of $S_{n'}$ for some $n'\in [n, n+\frac{nh}{100}]\cap \N$ and the same translation takes $\mathbf{0}$ to $v$. By translation invariance of the model $\P(\ce^{*}_{n,v,h})=\P(\ce_{n',h})$ and hence we conclude that for all $n$ sufficiently large, there exists $n'\in [n, n+\frac{nh}{100}]\cap \N$ such that $\P(\ce_{n',h})\leq Cn^{-1/3}$ completing the proof of the proposition.
%
%
%
\end{proof}

\subsection{Proof of Lemma \ref{l:sidetoside}}
We complete this section with the proof of Lemma \ref{l:sidetoside}. The key to the proof of this lemma is the following variant of Theorem \ref{t:multi2}.
Consider the lines $x+y=2nh/100$ and $x+y=-2nh/100$; denote these lines by $L_1$ and $L_2$ respectively for brevity.  
Let $(I,J)$ denote an $h$-compatible pair as in the statement of Lemma \ref{l:sidetoside}. Let us define an equivalence relation on the pairs $(u,v)$ with $u\in I, v\in J$. We say $(u,v)\sim (u',v')$ if $\Gamma_{u,v}$ and $\Gamma_{u',v'}$ coincide between $L_1$ and $L_2$. Let $M_{n,I,J}$ denote the number of equivalence classes. We have the following proposition.

\begin{proposition}
\label{t:multi3plus}
For each $h\in (0,1)$ sufficiently small, there exists $c=c(h)>0$ such that for all $\ell< n^{0.01}$ sufficiently large, $n\in \N$ sufficiently large and for each $h$-compatible pair $(I,J)$ we have
$$\P(M_{n,I,J}\geq \ell)\leq e^{-c\ell^{1/128}}.$$
\end{proposition}

Using Proposition \ref{t:multi3plus}, Lemma \ref{l:sidetoside} is almost immediate. 

\begin{proof}[Proof of Lemma \ref{l:sidetoside}]
Fix $h\in (0,1)$ sufficiently small and an $h$ compatible pair $(I,J)$. One deterministically has that $N_{n}(I,J)\leq \frac{4nh}{100}M_{n,I,J}$, and therefore it suffices to show that $\E M_{n,I,J} \leq C$ for some $C=C(h)>0$. Observing that, deterministically, $M_{n,I,J}\leq n^{4/3}$ it follows that 
$$\E M_{n,I,J} \leq \sum_{\ell< n^{0.01}} \P(M_{n,I,J}\geq \ell) +n^{4/3}\P(M_{n,I,J}\geq 0.5n^{0.01}).$$
The proof is completed using Proposition \ref{t:multi3plus} to bound both the terms above. 
\end{proof} 

It remains to provide the proof of Proposition \ref{t:multi3plus}. Notice that there are four cases two consider: (i) both $I$ and $J$ are parallel vertical line segments, (ii) both $I$ and $J$ are parallel horizontal line segments, (iii) $I$ is a vertical line segment and $J$ is a horizontal line segment, (iv) $I$ is a horizontal line segment and $J$ is a vertical line segment. We shall provide the proof for (i) and (iii); the proof for (ii) is identical to that of (i) and the proof for (iv) is identical to that of (iii) using the invariance of the LPP model under reflection on the line $x+y=0$.

\begin{figure}
\includegraphics[width=0.8\textwidth]{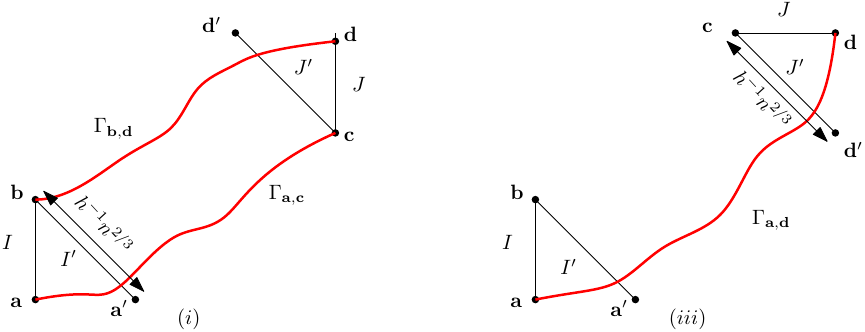}
\caption{Proof of Proposition 4.2: to be able to use Theorem \ref{t:multi2} we consider the geodesics between $I'$ and $J'$ (instead of $I$ and $J$) which are parallel to the anti-diagonal line $x+y=0$. In both scenario (i) and scenario (iii) $I'$ and $J'$ are constructed in such a way, that with high probability any geodesic from a point in $I$ to a point in $J$ will cross both $I'$ and $J'$. Indeed, by planar ordering, any geodesic from a point in $I$ to a point in $J$ is sandwiched between $\Gamma_{\mathbf{a},\mathbf{d}}$ and $\Gamma_{\mathbf{b},\mathbf{c}}$ in scenario (i) and $\Gamma_{\mathbf{a},\mathbf{c}}$ and $\Gamma_{\mathbf{b},\mathbf{d}}$ in scenario (iii).}
\label{f:IJ}
\end{figure}

\begin{proof}[Proof of Proposition \ref{t:multi3plus}]
For $(I,J)$ in cases (i) or (iii) as described above, let us consider $I'$ and $J'$ as follows (See Figure \ref{f:IJ}). In scenario (i), let us assume without loss of generality that $I$ is the interval joining the points $\mathbf{a}$ and $\mathbf{b}=\mathbf{a}+(0,n^{2/3})$, and $J$ is the interval joining $\mathbf{c}$ and  $\mathbf{d}=\mathbf{c}+(0,n^{2/3})$. Let us now denote $I'$ to be the line segment parallel to $x+y=0$ joining $\mathbf{b}$ and $\mathbf{a'}:=\mathbf{b}+100h^{-1}(n^{2/3},-n^{2/3})$. Similarly let us define $J'$ to be the line segment parallel to $x+y=0$ joining $\mathbf{c}$ and $\mathbf{d'}:=\mathbf{c}-100h^{-1}(n^{2/3},-n^{2/3})$. In scenario (iii), let $I$ and $I'$ be as above (with possibly different value of $\mathbf{a}$), let $J$ be the interval joining $\mathbf{c}$ and  $\mathbf{d}=\mathbf{c}+(n^{2/3},0)$ (again, the value of $\mathbf{c}$ might be different), and let us define $J'$ to be the line segment parallel to $x+y=0$ joining $\mathbf{c}$ and $\mathbf{d'}:=\mathbf{c}+100h^{-1}(n^{2/3},-n^{2/3})$. 

Let us now define a natural analogue of the equivalence relation $\sim$ on the pairs $(u,v)$ with $u\in I', v\in J'$. We say $(u,v)\sim (u',v')$ if $\Gamma_{u,v}$ and $\Gamma_{u',v'}$ coincide between $L_1$ and $L_2$. Let $M'_{n,I,J}$ denote the number of equivalence classes. Let $\mathsf{T}_*$ denote the event that there exists $u\in I$, $v\in J$ such that the geodesic $\Gamma_{u,v}$ does not intersect $I'$ or $J'$. Clearly,
$$\P(M_{n,I,J}\geq \ell) \leq \P(M'_{n,I,J}\geq \ell)+\P(\mathsf{T}_{*}).$$
Clearly, Proposition \ref{t:multi2} applies to $M'_{n,I,J}$ for each $h$-compatible $(I,J)$ (by choosing $\psi$ sufficiently close to $1$ depending on $h$). Also, observe that $\P(\mathsf{T}_{*})$ can be upper bounded using the planar ordering of the geodesics as follows. In scenario (i): it is upper bounded by sum of the probability that the geodesics from $\mathbf{b}$ to $\mathbf{d}$ does not intersect $J'$ and the probability that the geodesic from $\mathbf{a}$ to $\mathbf{c}$ does not intersects $I'$.  In scenario (iii), $\P(\mathsf{T}_*)$ is upper bounded by the probability that geodesic from $\mathbf{a}$ to $\mathbf{d}$ does not intersect at least one of $I'$ and $J'$. Notice that in each of these cases, $\mathsf{T}_{*}$ implies a large transversal fluctuation of one of the geodesics. It therefore follows using \cite[Theorem 3, Corollary 2.4]{BSS17++}  that $\P(\mathsf{T}_{*})\leq e^{-cn^{4/9}}\leq e^{-c\ell^{1/128}}$ for some $c>0$ and all $n$ sufficiently large where the final inequality follows from our assumption that $\ell<n^{0.01}$. The proof of the proposition is completed putting together the above with the upper bound on $\P(M'_{n,I,J}\geq \ell)$ which in turn is obtained using Theorem \ref{t:multi2}. 
\end{proof}

\section{Ruling out bigeodesics in axial directions}
\label{s:axial}
We prove Proposition \ref{p:axial} in this section using Theorem \ref{l:ltf} and provide proofs of Theorem \ref{p:tfbasic} and Theorem \ref{l:ltf}. Recall that Proposition \ref{p:axial} asserts that non-trivial bigeodesics passing through $\mathbf{0}$ directed in axial directions almost surely do not exist. By the obvious symmetry of the problem, it suffices  to rule out vertically directed bigeodesics only. 

To distinguish between two types of vertically directed geodesics, we first make the following definition. A bigeodesic $\gamma=\{v_i\}_{i\in \Z}$ with $v_0=\mathbf{0}$ with forward and backward limiting direction both equal to $\infty$ is called a \textbf{finite width bigeodesic} if $\lim_{n\to \infty} x_{n}-x_{-n} <\infty$ where $v_{n}=(x_n,y_n)$ for $n\in \Z$. A vertically upward directed semi-infinite geodesic $\gamma=\{v_i\}_{i\in \Z_{+}}$ started from $\mathbf{0}$ is called an \textbf{infinite width geodesic} if $x_{n}\to \infty$ as $n\to \infty$ where $v_n=(x_n,y_n)$. For a semi-infinite geodesic $\Gamma$ starting from $\mathbf{0}$ directed vertically upwards and $M\in \N$ we denote $\Gamma_{M}$ to be the smallest positive integer such that $(M,\Gamma_{M})\in \Gamma$.

We need the following two lemmas. 

\begin{lemma}
\label{l:fw}
Almost surely there does not exist any non-trivial vertically directed finite width bigeodesic.
\end{lemma}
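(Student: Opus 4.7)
My plan is to deduce Lemma~\ref{l:fw} from the local transversal fluctuation estimate (Theorem~\ref{l:ltf}) combined with translation invariance. The first step is to observe that a non-trivial vertically directed finite-width bigeodesic through $\mathbf{0}$ decomposes into two semi-infinite geodesic rays $\gamma^+$ and $\gamma^-$ emanating from $\mathbf{0}$, each directed vertically (upward and downward respectively) and each making only finitely many horizontal steps. Non-triviality of the bigeodesic forces at least one of $\gamma^{\pm}$ to take at least one horizontal step. By the distributional symmetry $(x,y)\mapsto (-x,-y)$, it suffices to prove that there is almost surely no non-trivial upward vertically directed finite-width semi-infinite geodesic starting at $\mathbf{0}$, the downward case being handled identically.

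For integers $M\ge 1$ and $K\ge 1$, I would define $G_{M,K}$ to be the event that some semi-infinite geodesic $\gamma^+$ from $\mathbf{0}$ exists with total rightward displacement exactly $M$ and completing all of its rightward steps at or below height $K$, so that $\gamma^+$ agrees with $\{M\}\times[K,\infty)$ above height $K$. The event to be ruled out is the countable union $\bigcup_{M,K\ge 1}G_{M,K}$, so by countable subadditivity it is enough to show $\P(G_{M,K})=0$ for each fixed pair $(M,K)$.

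On $G_{M,K}$, for every $N\ge K$ the vertex $(M,N)$ lies on $\gamma^+$, so by almost sure uniqueness of finite geodesics the restriction of $\gamma^+$ between $\mathbf{0}$ and $(M,N)$ coincides with $\Gamma_{\mathbf{0},(M,N)}$; in particular $\Gamma_{\mathbf{0},(M,N)}$ passes through $(M,K)$. Hence
\begin{equation*}
\P(G_{M,K}) \;\le\; \P\bigl(\Gamma_{\mathbf{0},(M,N)} \ni (M,K)\bigr) \qquad \text{for every } N\ge K.
\end{equation*}
I would then apply Theorem~\ref{l:ltf} with parameters $\varepsilon=M/N$, $n=N$, and height $L=K$: it yields that the $x$-coordinate of $\Gamma_{\mathbf{0},(\varepsilon n,n)}$ at height $K$ concentrates around the straight-line value $\varepsilon K=MK/N$ on scale $(M/N)^{2/3}K^{2/3}$ with stretched exponential tails. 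For the geodesic actually to pass through $(M,K)$ requires an atypical deviation of order $M(1-K/N)$, corresponding to $t\asymp M^{1/3}N^{2/3}(1-K/N)/K^{2/3}$ fluctuation scales. With $M$ and $K$ fixed and $N\to\infty$, one has $t\to\infty$, so the probability on the right tends to $0$, and sending $N\to\infty$ in the display forces $\P(G_{M,K})=0$.

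Combining the upward and downward cases via the symmetry above shows that the event $D$ that a non-trivial vertically directed finite-width bigeodesic passes through $\mathbf{0}$ has probability zero. Translation invariance then gives $\P(v\in S)=0$ for every $v\in\Z^2$, where $S$ denotes the set of lattice points lying on some non-trivial vertically directed finite-width bigeodesic; countable subadditivity forces $S=\emptyset$ almost surely, which is exactly the assertion of Lemma~\ref{l:fw}. The main technical point of the proposal is not the reduction itself but verifying that Theorem~\ref{l:ltf} remains effective in the very steep regime $\varepsilon=M/N\to 0$ with the tail strong enough to beat the order-one deviation $M(1-K/N)$; this is precisely the generalization, over the earlier $\varepsilon$-bounded-away-from-zero result of~\cite{BSS17++}, that the authors have in hand, so the argument should go through without further input.
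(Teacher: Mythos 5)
Your approach differs substantially from the paper's, and it contains a genuine gap at the step where you invoke Theorem~\ref{l:ltf}. The theorem carries the explicit hypothesis $\frac{C_0}{L}\le \varepsilon<\varepsilon_0$, i.e.\ $\varepsilon L\ge C_0$. You apply it with $\varepsilon=M/N$, $L=K$ and then send $N\to\infty$ with $M,K$ held fixed; but then $\varepsilon L = MK/N \to 0$, so the hypothesis fails as soon as $N>MK/C_0$. This constraint is not cosmetic: it is inherited from the moderate-deviation input (Theorem~\ref{t:moddevex}, from \cite{LR09}), which needs the end-to-end horizontal displacement $\varepsilon n$ to be at least of order one; the whole chain (Proposition~\ref{p:supsteep} $\to$ Proposition~\ref{p:tfbasic} $\to$ Theorem~\ref{l:ltf}) keeps this restriction. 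Within the allowed window $N\lesssim MK/C_0$ the deviation parameter you compute is $t\asymp M^{1/3}N^{2/3}(1-K/N)/K^{2/3}\lesssim M$, a bounded quantity, so you only obtain $\P(G_{M,K})\le e^{-cM}$, not $\P(G_{M,K})=0$. There is no way to push $t\to\infty$ with $M,K$ fixed. (Compare with how the paper uses Lemma~\ref{l:contradiction}/Theorem~\ref{l:ltf} in the proof of Lemma~\ref{l:ifw}: there $\varepsilon$ is chosen exactly at the boundary $\varepsilon=C_0/L$, and the contradiction is obtained by sending $M\to\infty$ — which is available there precisely because the geodesic has \emph{infinite} width.)

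There is also a structural mismatch worth flagging. By reducing to semi-infinite rays you discard the bi-infinite structure, and you end up trying to prove the stronger (and much harder) statement that no upward semi-infinite geodesic from $\mathbf{0}$ of finite positive width exists. The paper's proof of Lemma~\ref{l:fw} never uses transversal fluctuations at all: it fixes the two asymptotic columns $a<b$ of a putative finite-width bigeodesic, notes that for a.e.\ realization there is \emph{at most one} $(a,b)$ bigeodesic (by uniqueness of finite geodesics), lets $\mathcal{C}(a,b;i)$ be the event that this bigeodesic takes its first rightward step from column $a$ at height $i$, observes that the events $\mathcal{C}(a,b;i)$ are pairwise disjoint in $i$ while having the same probability by vertical translation invariance, and concludes $\P(\mathcal{C}(a,b;i))=0$; a union over $a<b$ finishes. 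This soft ergodic-type argument is exactly what the bi-infinite structure buys you and what your reduction throws away. As a minor further point, your argument also requires $K>L_0$ for Theorem~\ref{l:ltf} to be invoked, so small heights would need a separate treatment in any case.
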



\begin{lemma}
\label{l:ifw}
For a fixed $\delta>0$, there exists $M_0(\delta)$ sufficiently large such that for $M\in \N$ with $M\ge M_0$ sufficiently large and all $L\in \N$, the probability that there exists an infinite width semi-infinite geodesic $\Gamma$ started from $\mathbf{0}$ directed vertically upwards with $\Gamma_M \leq L$ is at most $\delta$.
%
%
\end{lemma}

Postponing momentarily the proofs of Lemma \ref{l:fw} and Lemma \ref{l:ifw}, we first complete the proof of Proposition \ref{p:axial}. 

\begin{proof}[Proof of Proposition \ref{p:axial}]
Observe that the set of all infinite width semi-infinite geodesics started from $\mathbf{0}$ directed vertically upwards is ordered if it is nonempty, let $\Gamma$ denote the leftmost such geodesic, if it exists. By definition, $\Gamma_{M}<\infty$ for each $M\in \N$. Hence, if $\Gamma$ exists with positive probability; for each $M$ in $\N$ there must exist $L\in \N$ such that with uniformly positive probability (say with probability at least $\theta>0$) we have $\Gamma_{M}\leq L$. By choosing $\delta$ small compared to $\theta$, and $M$ sufficiently large, this is contradicted by Lemma \ref{l:ifw} and hence almost surely there does not exist any infinite width semi-infinite geodesic started from $\mathbf{0}$ directed vertically upwards, and the same argument can be used to rule our vertically downward directed infinite width semi-infinite geodesic rays started at $\mathbf{0}$. This, together with Lemma \ref{l:fw} implies that almost surely there does not exist any non-trivial vertically directed bigeodesic passing through $\mathbf{0}$. The proof of the proposition is completed using the obvious symmetry of the problem to handle horizontally directed non-trivial bigeodesics.
\end{proof}

%

We now provide the proofs of Lemma \ref{l:fw} and Lemma \ref{l:ifw}.

\begin{proof}[Proof of Lemma \ref{l:fw}]
For $a<b\in \Z$, we call a vertically directed finite width bigeodesic $\gamma=\{v_i\}_{i\in \Z}$ (not necessarily passing through $\mathbf{0}$) an $(a,b)$ bigeodesic if $\lim_{n\to -\infty} x_{n}=a$ and $\lim_{n\to \infty}x_{n}=b$. Clearly it suffices to show that, for each $(a,b)\in \Z^2$, $a<b$; almost surely there does not exist any $(a,b)$ bigeodesic. Fix $a<b\in \Z$. For $i\in \Z$; let $\mathcal{C}(a,b;i)$ denote the event that there exists an $(a,b)$ bigeodesic $\gamma$ such that $(a,i)\in \gamma$ and $(a+1,i)\in \gamma$. Clearly, by translation invariance $\P(\mathcal{C}(a,b;i))$ does not depend on $i$. Observe also that for almost every given realization of vertex weights; there can be at most one $(a,b)$ bigeodesic and hence $\mathcal{C}(a,b;i)$ can hold for at most one $i$. This implies that $\P(\mathcal{C}(a,b;i))=0$ for all $i\in \Z$ which, in turn, implies that almost surely there does not exist any $(a,b)$ bigeodesic. Taking an union bound over all pairs $(a,b)$, we get the result.
\end{proof}


%


\begin{figure}[htbp!]
\centering
\includegraphics[width=0.25\textwidth]{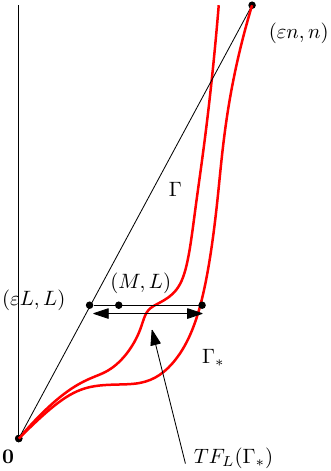}
\caption{An illustration of the argument in the proof of Lemma \ref{l:ifw}. By way of contradiction we assume that for a vertically directed semi-infinite geodesic started from the origin, we have $\Gamma_{M}\leq L$ with probability bounded below. This ensures, by definition, that $\Gamma$ passes to the right of the point $(M,L)$. Now we choose $\varepsilon$ sufficiently small so the point $(\varepsilon L, L)$ lies to the left of $\Gamma$ and for $n$ sufficiently large with uniformly positive probability the geodesic $\Gamma_{*}$ from $\mathbf{0}$ to $(\varepsilon n,n)$ lies to the right of $\Gamma$. This implies that $TF_{L}(\Gamma_*)$, the local transversal fluctuation of $\Gamma_{*}$ at scale $L$ is much larger than typical and a contradiction is obtained using Theorem \ref{l:ltf}.}
\label{f:steep2}
\end{figure}

\begin{proof}[Proof of Lemma \ref{l:ifw}]
We shall prove Lemma \ref{l:ifw} by contradiction. Let $\delta>0$ be fixed. Let $c$ and $C_0$ be as in Theorem \ref{l:ltf}. Let us fix $M\geq 10C_0\vee M_{0}(\delta)$ where $M_0$ is to be chosen sufficiently large later. By way of contradiction let us suppose that $L\in \N$ is such that with probability more than $\delta$ there exists a semi-infinite geodesic $\Gamma$ as in the statement of the lemma with $\Gamma_{M}\leq L$. Observe that we can take $L$ to be sufficiently large ($L>L_0$ where $L_0$ is as in Theorem \ref{l:ltf}). Let us now fix $\varepsilon\geq C_0/L$ such that $M\geq 2\varepsilon L$. 

By the assumed vertical direction of $\Gamma$, there must exist (random) $n$ sufficiently large such that the point $(\varepsilon n, n)$, lies to the right of $\Gamma$ (see Figure \ref{f:steep2}), and by the hypothesis and planar ordering of the geodesics there exists (deterministic) $n$ sufficiently large such that with probability at least $\delta/2$, the geodesic $\Gamma_{*}$ from $\mathbf{0}$ to $(\varepsilon n, n)$ lies to the right of $\Gamma$.
Using the notation of Theorem \ref{l:ltf} this implies that $TF_{L}(\Gamma_{*})\geq M-\varepsilon L \geq M/2$ where in the last inequality we use $M\geq 2\varepsilon L$, and hence $\P(TF_{L}(\Gamma_{*})\geq M/2)\geq \delta/2$.

Let us now choose $M_0$ sufficiently large so that $(M/2)>x_0^{3}$ where $x_0$ is as in Theorem \ref{l:ltf} (this ensures that $M/(2(\varepsilon L)^{2/3})\geq (M/2)^{1/3}\geq  x_0$ and hence Theorem \ref{l:ltf} applies) and $e^{-c(M/2)^{1/9}}<\delta/2$. These two assumptions imply, invoking Theorem \ref{l:ltf}, that $\P(TF_{L}(\Gamma_{*})\geq M/2)\leq e^{-c(M/2)^{1/9}}<\delta/2$, which is a contradiction. This completes the proof.
%
%
%
\end{proof}

\subsection{Transversal Fluctuation of Steep Geodesics}
It remains to prove Theorem \ref{p:tfbasic} and Theorem \ref{l:ltf} which is done in this subsection. Recall that for  any path $\gamma$ from $\mathbf{0}$ to $(\varepsilon n, n)$, let the local transversal fluctuation of $\gamma$ at length scale $L$ be
$$TF_{L}(\gamma):=\sup\{(x-\varepsilon L)_{+}: (x,L)\in \gamma\}.$$ 
Theorem \ref{p:tfbasic} asserts that for $m\in (\frac{\varepsilon}{10},10\varepsilon)$, and the geodesic $\Gamma$ from $\mathbf{0}$ to $(mn,n)$, we have with large probability $TF_{L}(\Gamma)=O(\varepsilon^{2/3}n^{2/3})$ for each $L\in [\frac{n}{4},\frac{n}{2}]$. We first need an auxiliary result which is an analogue of Theorem \ref{t:supinf}, (ii). 
For $m, \varepsilon>0$, $n\in \N$, let $U=U_{n,m,\varepsilon}$ denote the parallelogram whose vertices are $\mathbf{0}$, $(\varepsilon^{2/3}n^{2/3},0)$, $(mn,n)$, $(mn+\varepsilon^{2/3}n^{2/3},n)$. Let $A_{U}$ and $B_{U}$ denote the bottom and top side of $U$ respectively. We have the following proposition. 

\begin{proposition}
\label{p:supsteep}
There exist constants $C_0,c,x_0, \varepsilon_0>0$ such that for each $\varepsilon\in (\frac{C_0}{n}, \varepsilon_0)$, $m\in (\frac{\varepsilon}{100}, 100 \varepsilon)$, $n$ sufficiently large and $x>x_0$, we have the following:
$$\P\left(\sup_{u\in A_{U}, v\in B_{U}} T_{u,v}-\E T_{u,v} \geq x \varepsilon^{-1/6} n^{1/3}\right)\leq e^{-cx}.$$
\end{proposition}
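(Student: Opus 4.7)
The proof parallels that of Proposition \ref{t:treesup} in \cite{BSS14}, now with Theorem \ref{t:moddevex} in place of Theorem \ref{t:moddev} as the single-point moderate deviation input. First, for any fixed $(u, v) \in A_U \times B_U$, the slope from $u$ to $v$ equals $m$ perturbed by at most $\varepsilon^{2/3}n^{-1/3}$, so it lies in $(\varepsilon/200, 200\varepsilon)$; after translating $u$ to the origin, Theorem \ref{t:moddevex} therefore yields the single-point bound $\P(T_{u,v} - \E T_{u,v} \geq x\varepsilon^{-1/6}n^{1/3}) \leq e^{-cx}$ with an absolute constant $c$.

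The next step is a discretization. I would partition $A_U$ and $B_U$ into $K \asymp (\varepsilon n)^{1/3}$ sub-intervals, each of length $\ell := \varepsilon^{1/3}n^{1/3}$. The choice of $\ell$ is dictated by the requirement that the mean variation of $\E T_{u,v}$ across a single sub-cell match the fluctuation scale: using the expansion $\E T_{\mathbf{0}, (c, n)} \approx (\sqrt{n}+\sqrt{c})^2$ around $c = mn$, the derivative in $c$ is of order $\varepsilon^{-1/2}$, so that shifting an endpoint by $\ell$ changes the mean by $O(\ell\,\varepsilon^{-1/2}) = O(\varepsilon^{-1/6}n^{1/3})$. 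Polymer ordering then gives, for any $(u, v)$ with $u_i \preceq u$ and $v \preceq v_{j+1}$ its outer grid corners,
\[
\sup_{u, v}\bigl[T_{u, v} - \E T_{u, v}\bigr] \leq \max_{i, j}\bigl[T_{u_i, v_{j+1}} - \E T_{u_i, v_{j+1}}\bigr] + C\varepsilon^{-1/6}n^{1/3}.
\]
A union bound using the single-point estimate over the $K^2 \asymp (\varepsilon n)^{2/3}$ grid pairs yields the desired tail, up to a polynomial prefactor $(\varepsilon n)^{2/3}$.

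The main obstacle is absorbing this polynomial prefactor into the exponential tail $e^{-cx}$ uniformly in $\varepsilon n$. For $\varepsilon n$ bounded the absorption is immediate. In general, I would either employ a multi-scale chaining argument analogous to the one implicit in the \cite{BSS14} proof of Proposition \ref{t:treesup}, iterating the discretization at dyadic scales and leveraging the fact that increments over shorter displacements have tighter tails, or upgrade the single-point bound in the relevant moderate-deviation window to a sharper Tracy--Widom type tail of the form $e^{-cx^{3/2}}$, which comfortably absorbs $(\varepsilon n)^{2/3}$. Either route follows the standard template of \cite{BSS14}, with the extra bookkeeping required to track the $\varepsilon$-dependence of all constants.
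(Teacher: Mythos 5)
Your overall direction is right --- replace Theorem \ref{t:moddev} by Theorem \ref{t:moddevex} and rerun the \cite{BSS14} argument --- and your bookkeeping of the mean variation (derivative $\asymp \varepsilon^{-1/2}$, so a cell of width $\varepsilon^{1/3}n^{1/3}$ contributes mean drift $\asymp \varepsilon^{-1/6}n^{1/3}$) is correct. But the discretize-and-union-bound strategy you commit to does not close, and neither of your proposed patches fixes it. The conclusion is an $e^{-cx}$ bound for all $x>x_0$ with $x_0$ a \emph{constant} independent of $n,\varepsilon$; a prefactor $(\varepsilon n)^{2/3}$ makes the bound trivial for any fixed $x$ once $n$ is large, and even an $e^{-cx^{3/2}}$ single-point tail cannot absorb a prefactor that grows with $n$ unless $x\gtrsim(\log(\varepsilon n))^{2/3}$. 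A chaining argument ``leveraging the fact that increments over shorter displacements have tighter tails'' is not a standard available input here: last-passage increments are not a Gaussian or sub-Gaussian process and no such modulus-of-continuity bound is proved or cited in the paper.

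The actual argument that Proposition \ref{t:treesup} in \cite{BSS14} uses, and which the paper intends you to transplant, avoids any union bound over a grid. It is the sandwiching trick visible in Lemma \ref{l:mean} of the present paper: pick a single anchor point $u_0$ strictly below-left of $A_U$ and a single anchor $v_0$ strictly above-right of $B_U$, both at distance of order the cell scale, so that for every $u\in A_U,\ v\in B_U$ one has $T_{u_0,v_0}\geq T_{u_0,u}+T_{u,v}+T_{v,v_0}$, hence
\[
\sup_{u\in A_U,\ v\in B_U} T_{u,v}\ \leq\ T_{u_0,v_0}\ -\ \inf_{u\in A_U} T_{u_0,u}\ -\ \inf_{v\in B_U} T_{v,v_0}.
\]
The upper tail of $T_{u_0,v_0}$ is then a \emph{single} application of Theorem \ref{t:moddevex}, and the two infima are controlled by the steep analogue of Proposition \ref{t:treeinf} (whose lower-tail argument does not suffer the same entropy obstruction), with no polynomial factors appearing. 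You should replace your union-bound step with this sandwich reduction; once you do, the only real work is checking that the slopes from $u_0$ to $A_U$ and from $B_U$ to $v_0$ stay in the admissible window for Theorem \ref{t:moddevex}, which is exactly the $\varepsilon$-dependent bookkeeping you have already done correctly.
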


\begin{proof}
The argument presented here is similar to \cite[Proposition 3.5]{BGHH20}. Let $G$ denote the event that 
$$\sup_{u\in A_{U}, v\in B_{U}} T_{u,v}-\E T_{u,v}\geq x \varepsilon^{-1/6} n^{1/3};$$
and let $G_{u,v}$ denote the event that in addition the supremum above is attained for $u\in A_{U}, v\in {B}_{U}$. Clearly $G_{u,v}$ are disjoint and $\cup_{u,v} G_{u,v}= G$. Consider the straight-line joining the midpoints of $A_{U}$ and $B_{U}$ and let $u_0$ and $v_0$ be the points where this line intersects the lines $y=-n$ and $y=2n$ respectively.  

Notice that \eqref{e:meansteep} implies that 
$$\sup_{u\in A_{U},v\in B_{U}}|\E T_{u_0,v_0}-\E T_{u_0,u}- \E T_{u,v}-\E T_{v,v_0}|\leq C'\varepsilon^{-1/6}n^{1/3}$$
for some $C'>0$ (here we have used the hypothesis that $m/\varepsilon$ is bounded above and below). For $x$ sufficiently large it now follows that 
$$\P\left(T_{u_0,v_0}-\E T_{u_0,v_0}\geq \frac{x}{2} \varepsilon^{-1/6}n^{1/3}\right)\geq \sum_{u\in A_{U},v\in B_{U}} \P(G_{u,v}\cap H^1_{u,v}\cap H^2_{u,v})$$
where $H^1_{u,v}$ (resp.\ $H^2_{u,v}$) denotes the event $T_{u_0,u}-\E T_{u_0,u} \geq -\frac{x}{10}\varepsilon^{-1/6}n^{1/3}$ (resp.\ $T_{v,v_0}-\E T_{v,v_0} \geq -\frac{x}{10}\varepsilon^{-1/6}n^{1/3}$). Notice that $G_{u,v}, H^1_{u,v}$ and $H^2_{u,v}$ are independent and \eqref{e:steep2} (and \eqref{e:meansteep}) implies that for $x$ sufficiently large $\P(H^1_{u,v}), \P(H^2_{u,v})\geq \frac{1}{2}$ for all $u\in A_{U}, v\in B_{U}$. It follows that 
$$\P\left(T_{u_0,v_0}-\E T_{u_0,v_0}\geq \frac{x}{2} \varepsilon^{-1/6}n^{1/3}\right)\geq \frac{1}{4}\sum_{u\in A_{U},v\in B_{U}} \P(G_{u,v})=\frac{1}{4}\P(G).$$
Using \eqref{e:steep} to conclude that the left hand side of the above display is upper bounded by $e^{-cx}$ completes the proof of the proposition.
\end{proof}

%


We are now ready to prove Theorem \ref{p:tfbasic}.

\begin{proof}[Proof of Theorem \ref{p:tfbasic}]
For the sake of brevity, we shall write a detailed proof only in the special case of $L=\frac{n}{2}$, the same argument goes through for any $L\in [\frac{n}{4},\frac{n}{2}]$ with little to no change as indicated at the end of this proof. The proof for the case $L=\frac{n}{2}$ is similar to the proof of \cite[Lemma 11.3]{BSS14}. Fix $x>0$ sufficiently large. Clearly if $x> \frac{1}{2}m\varepsilon^{-2/3}n^{1/3}$ there is nothing to prove, so let us assume that  $x\leq \frac{1}{2}m\varepsilon^{-2/3} n^{1/3}$. For $j\geq 0$, let $A_{j}$ denote the line segment joining $(mn/2+(x+j)\varepsilon^{2/3}n^{2/3},n/2)$ and $(mn/2+(x+j+1)\varepsilon^{2/3}n^{2/3},n/2)$.  Let $\mathcal{A}_{j}$ denote the event that 
$$\sup_{v\in A_j} T_{\mathbf{0},v}+T_{v, (mn,n)} \geq n(1+\sqrt{m})^2 -x\varepsilon^{-1/6}n^{1/3}.$$ Let $\mathcal{B}$ denote the event that $$T_{\mathbf{0}, (mn,n)} \leq n(1+\sqrt{m})^2 -x\varepsilon^{-1/6}n^{1/3}.$$ 
Finally let $\mathcal{C}$ denote the event that 
$$T_{\mathbf{0}, (mn, n/2)}+ T_{(0.9mn,n/2), (mn,n)} \geq n(1+\sqrt{m})^2 -x\varepsilon^{-1/6}n^{1/3}.$$ Clearly, 
$$\P(TF_{\frac{n}{2}}(\Gamma) \geq x\varepsilon^{2/3}n^{2/3})\leq \P(\mathcal{C})+\P(\mathcal{B})+ \sum_{j=0}^{0.4m \varepsilon^{-2/3}n^{1/3}-x}\P(\mathcal{A}_{j}).$$
Notice that the third term in the above sum is empty if $x>0.4m\varepsilon^{-2/3}n^{1/3}$.
It follows from \eqref{e:steep2} (and that $m/\varepsilon$ is bounded above and below) that $\P(\mathcal{B})\leq e^{-cx}$ for some $c>0$. To upper bound $\P(\mathcal{C})$ notice that 
$$\frac{n}{2}\left[(1+\sqrt{2m})^2+ (1+\sqrt{0.2 m})^2\right]=n(1+\sqrt{m})^2-c'n\sqrt{m}$$
for some absolute constant $c'>0$. Notice further that if $C_0$ is sufficiently large, then $x\leq \frac{1}{2}m\varepsilon^{-2/3} n^{1/3}$, $m\in ( \varepsilon/10, 10\varepsilon)$ and $\varepsilon> C_0/n$, together imply that $c'n\sqrt{m}\ge 3x\varepsilon^{-1/6}n^{1/3}$ and it follows that
$$\P(\mathcal{C})\leq \P(T_{\mathbf{0}, (mn, n/2)}-\frac{n}{2}(1+\sqrt{2m})^2\geq x\varepsilon^{-1/6}n^{1/3})+ \P(T_{(0.9mn,n/2), (mn,n)}-\frac{n}{2}(1+\sqrt{0.2m})^2) \geq x\varepsilon^{-1/6}n^{1/3})$$
and finally \eqref{e:steep} implies that $\P(\mathcal{C})\leq e^{-cx}$ for some $c>0$. 

It remains to consider the case $x\leq 0.4m\varepsilon^{-2/3}n^{1/3}$ and bound $\P(\mathcal{A}_j)$. It follows from \eqref{e:meansteep} that for each $j$, 

\begin{equation}
\label{e:steeppen}
\sup_{v\in A_j} \E T_{\mathbf{0},v}+\E T_{v, (mn,n)} \leq n(1+\sqrt{m})^2-c'(x+j)^2\varepsilon^{-1/6}n^{1/3}
\end{equation}
for some constant $c'>0$. Arguing as above, but using Proposition \ref{p:supsteep} instead, it now follows that for $x$ sufficiently large $\P(\mathcal{A}_j)\leq e^{-c(x+j)}$ for some $c>0$. Summing over all $j$, and taking a union bound over $\cup_{j}\mathcal{A}_j, \mathcal{B}$ and $\mathcal{C}$, this completes the proof of the proposition for the case $L=\frac{n}{2}$. 

For any $L\in [\frac{n}{4},\frac{n}{2}]$, one can define $A_j$ to be the line segment joining $(mL+(x+j)\varepsilon^{2/3}n^{2/3},L)$ and $(mL+(x+j+1)\varepsilon^{2/3}n^{2/3},L)$, and define $\mathcal{A}_{j}$ similarly as before. Let $\mathcal{B}$ be the same as above, and let $\mathcal{C}$ be defined by changing $\frac{n}{2}$ to $L$ in an obvious way. Since $\frac{L}{n}$ is bounded away from $0$ and $1/2$, \eqref{e:steeppen} continues to hold (with possibly a changed value of $c'$) and one can bound the probabilities of $\mathcal{A}_j$, $\mathcal{B}$ and $\mathcal{C}$ in the same manner as above to conclude the proof for any $L\in [\frac{n}{4},\frac{n}{2}]$.
\end{proof}

We can now complete the proof of Theorem \ref{l:ltf}; this is similar to \cite[Theorem 3]{BSS17++}. 

\begin{figure}[h]
\centering
\includegraphics[width=0.4\textwidth]{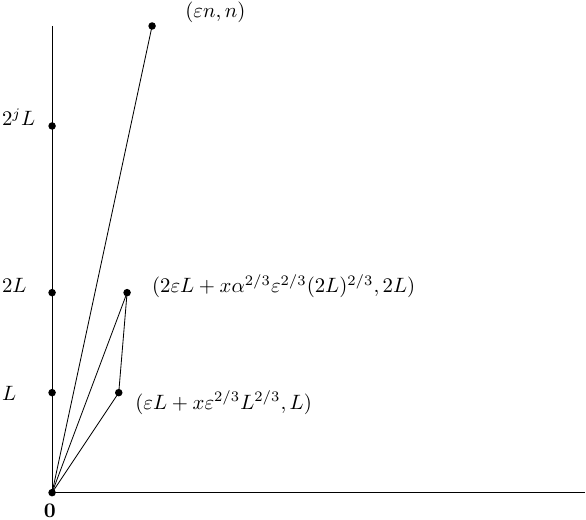}
\caption{Theorem \ref{l:ltf} shows that it is unlikely that the geodesic from $\mathbf{0}$ to $(\varepsilon n, n)$ has a large transversal fluctuation (at the scale $\varepsilon^{2/3}L^{2/3}$) at height $L$. To prove this one shows that it is unlikely that the best path from $\mathbf{0}$ to $(2\varepsilon L+x\alpha^{2/3}\varepsilon^{2/3}(2L)^{2/3}, 2L)$ via $(\varepsilon L+x\varepsilon^{2/3}L^{2/3}, L)$ is competitive with the geodesic from $\mathbf{0}$ to $(2\varepsilon L+x\alpha^{2/3}\varepsilon^{2/3}(2L)^{2/3}, 2L)$. Doing this calculation at all dyadic scales and summing over scales gives the desired result.}
\label{f:steep}
\end{figure}

\begin{proof}[Proof of Theorem \ref{l:ltf}]
{Without loss of generality let us for now} assume that $n=2^{j_0}L$ for some $j_0\in \N$. Fix $x$ sufficiently large. Fix a real number $\alpha\in (1,\sqrt{2})$. For $j\leq j_0$, let $\mathcal{A}_{j}$ denote the event that $TF_{2^{j}L}(\Gamma) \geq x(\alpha^{j}\varepsilon 2^{j}L)^{2/3}$. Clearly it suffices to show that 
$$\sum_{j\geq 1} \P(\mathcal{A}_{j}^c\cap \mathcal{A}_{j-1}) \leq e^{-cx^{1/3}}.$$ 

Observe that, by planar ordering of the geodesics, one has, on $\mathcal{A}^c_{j}\cap \mathcal{A}_{j-1}$, the geodesic $\Gamma^{*}$ from $\mathbf{0}$ to $v':=(\varepsilon 2^{j}L+x(\alpha^{j}\varepsilon 2^{j}L)^{2/3}), 2^{j}L)$ lies to the right of $\Gamma$ and hence it intersects the line $y=2^{j-1}L$ at some point to the right of $(\varepsilon 2^{j-1}L+x(\alpha^{j-1}\varepsilon 2^{j-1}L)^{2/3}), 2^{j-1}L)$. It follows from definition that 

$$TF_{2^{j-1}L}(\Gamma^{*}) \geq x(\alpha^{j-1}\varepsilon 2^{j-1}L)^{2/3}-\frac{x}{2}(\alpha^{j}\varepsilon 2^{j}L)^{2/3}>0$$
using $\alpha<\sqrt{2}$. By choosing $\alpha$ appropriately, we ensure that $\frac{1}{(2\alpha)^{2/3}}-\frac{1}{2}\geq 0.1$ and hence 
$$TF_{2^{j-1}L}(\Gamma^{*})\geq 0.1 x\alpha^{2j/3} (\varepsilon 2^{j}L)^{2/3}.$$
%
Now we need to consider two cases.
 
\textbf{Case 1:} If $x\alpha^{2j/3} (\varepsilon 2^{j}L)^{2/3} \leq  9\varepsilon 2^{j}L$, then $v'=(m2^{j}L,2^{j}L)$ for some $m\in (\frac{\varepsilon}{10}, 10\varepsilon)$. Since $\varepsilon L$ is assumed to be sufficiently large, Proposition \ref{p:tfbasic} applies to $\Gamma^{*}$ and we get that for some $c>0$
$$\P(\mathcal{A}_{j}\cap \mathcal{A}_{j-1}^{c})\leq e^{-cx\alpha^{2j/3}}\leq e^{-cx^{1/3}\alpha^{2j/9}}.$$
 
   
\textbf{Case 2:} In the other case, notice that we cannot directly apply Proposition \ref{p:tfbasic} to $\Gamma^*$ with the same $\varepsilon$. Consider $\varepsilon'=(2^{j}L)^{-1}x(\alpha^{j}\varepsilon 2^{j}L)^{2/3}$. Notice that in this case 
$v'=((\varepsilon+\varepsilon')2^{j}L, 2^jL)$, and by the assumption $x\alpha^{2j/3} (\varepsilon 2^{j}L)^{2/3} >  9\varepsilon 2^{j}L$, we have $\varepsilon+\varepsilon'<10\varepsilon'$. Observe also that in this case we have
$$TF_{2^{j-1}L}(\Gamma^*)\geq 0.1 \varepsilon' 2^{j}L= 0.1 (\varepsilon' 2^{j}L)^{1/3} (\varepsilon' 2^{j}L)^{2/3}.$$
Finally notice that $\varepsilon' 2^j L \geq 9\varepsilon 2^{j}L$ and hence is sufficiently large (recall $\varepsilon L$ is assumed to be sufficiently large) and hence Proposition \ref{p:tfbasic} applies to $\Gamma^*$ with $\varepsilon$ replaced by $\varepsilon'$ and hence we have 
$$\P(\mathcal{A}_{j}\cap \mathcal{A}_{j-1}^{c}) \leq e^{-c'(\varepsilon' 2^{j}L)^{1/3}}$$
for some $c>0$. Now, $\varepsilon' 2^{j}L=x\alpha^{2j/3}(\varepsilon 2^{j}L)^{2/3}$ and hence in this case we get $\P(\mathcal{A}_{j}\cap \mathcal{A}_{j-1}^{c})\leq e^{-cx^{1/3}\alpha^{2j/9}}$ for some $c>0$ in this case also. 

Combining the two cases above and summing this over all $j$ gives the result when $\frac{n}{L}$ is an integer power of $2$. Suppose now $n\in (2^{j_0}L, 2^{j_0+1}L)$ for some $j_0\ge 3$ (recall we have assumed $L\leq n/8$). Clearly we can run through the same argument as above provided we can show that 
$$\P\left(TF_{2^{j_0-1}L}(\Gamma) \geq x(\alpha^{j_0-1}\varepsilon 2^{j_0-1}L)^{2/3}\right)\leq e^{-cx^{1/3}}$$ 
for some $c>0$. Since $2^{j_0-1}L\in [\frac{n}{4},\frac{n}{2}]$, and $x$ is sufficiently large, this follows from Theorem \ref{p:tfbasic} and we are done in the more general case as well. 
%
\end{proof}

\bibliography{slowbond}
\bibliographystyle{plain}

\Addresses
\end{document}